\theoremstyle{plain}
\newtheorem{theorem}{Theorem}[section]
\newtheorem{lemma}[theorem]{Lemma}
\theoremstyle{definition}
\newtheorem{definition}[theorem]{Definition}
\newtheorem{example}[theorem]{Example}
\theoremstyle{remark}
\newcommand{\R}{\mathbb{R}}
\newcommand{\C}{\mathbb{C}}
\newcommand{\T}{\mathbb{T}}
\newcommand{\D}{\mathbb{D}}
\newcommand{\N}{\mathbb{N}}
\DeclareMathOperator{\spn}{Span}
\DeclareMathOperator{\sgn}{\mathrm{sgn}}
\newcommand{\norm}[1]{\left\lVert#1\right\rVert}
\subjclass[2020]{Primary 30C15; Secondary 30H10, 47A16.}
\begin{document}

\title[Equidistribution of zeros]{Equidistribution of zeros of some polynomials related to cyclic functions}
\author[Acuaviva]{Antonio Acuaviva}
\address{Universidad Complutense de Madrid, Facultad de Matem\'aticas, Plaza de Ciencias 3, 28040 Madrid (Madrid), Spain. } \email{ahacua@gmail.com}
\author[Seco]{Daniel Seco}
\address{Universidad Carlos III de Madrid and Instituto de Ciencias Matem\'aticas, Departamento de Matem\'aticas, Avenida de la Universidad 30, 28911 Legan\'es (Madrid), Spain.} \email{dseco@math.uc3m.es}

\date{\today}

\begin{abstract}
In the study of the cyclicity of a function $f$ in reproducing kernel Hilbert spaces an important role is played by sequences of polynomials $\{p_n\}_{n\in \N}$ called \emph{optimal polynomial approximants} (o.p.a.). For many such spaces and when the functions $f$ generating those o.p.a. are polynomials without zeros inside the disk but with some zeros on its boundary, we find that the weakly asympotic distribution of the zeros of $1-p_nf$ is the uniform measure on the unit circle.
\end{abstract}

\maketitle

\section{Introduction}\label{Introduction}

Let $\omega=\{\omega_k\}_{k \in \N}$ be a positive sequence. The \emph{weighted Hardy space} with weight $\omega$ is the space $H^2_\omega$ of holomorphic functions $f$ over the unit disk  $\mathbb{D}$ of the complex plane, given by $f(z) = \sum_{k=0}^{\infty} a_k z^k$ with
\begin{equation}
    \norm{f}^2_\omega := \sum_{k=0}^{\infty} |a_k|^2 \omega_k < \infty.
\end{equation}

A special subclass is formed by the family of weights $\omega_k = (k + 1)^\alpha$, for some $\alpha \in \R$, usually referred to as \emph{Dirichlet-type spaces}. The values $\alpha=-1,0,1$ receive their own names and symbols: the \emph{Bergman space} $A^2$, the \emph{Hardy space} $H^2$ and the \emph{Dirichlet space} $\mathcal{D}$. We refer the reader to the monographs \cite{HKZ, Gar, EFKMR} respectively for the basics about these spaces.

The shift operator $S$ is the operator defined on $H^2_\omega$ by $Sf(z)=zf(z)$ and we say that a (closed) subspace $M \subset H^2_\omega$ is \emph{invariant} if $SM\subset M$. In the theory of invariant subspaces, to which all the above references dedicate a significant effort, a natural question is that of identifying \emph{cyclic functions}, that is, functions that are elements of a space but are contained in no proper invariant subspace. If a function has zeros inside the disk, it is simple to disprove its cyclicity, while if it converges beyond the boundary and has no zeros on the closed disk, it is automatically cyclic. Here, we focus on the \emph{critical} functions, in the sense of having some zeros on the boundary but none inside the disk.

In the last decade, an approach to the study of cyclic functions has been developed \cite{BCLSS} based on the observation that the function $1$ is always cyclic, and therefore a function $f \in  H^2_\omega$ is cyclic if and only if there exists a sequence of polynomials such that \[\|1-p_nf\|_\omega \rightarrow 0, \text{ as } n \rightarrow \infty,\] thus motivating the study of the following family of polynomials. Denote by $\mathcal{P}_n$ the space of polynomials of degree at most $n$.

\begin{definition}\label{opa}
Let $f \in H^2_\omega$, and $n \in \N$. We say that $p_n$ is the \emph{optimal polynomial approximant} (or \emph{o.p.a.}) to $1/f$ of degree less or equal to $n$ if $\|1-p_nf\|_\omega \leq \|1-qf\|_\omega$ for any $q \in \mathcal{P}_n$.
\end{definition}
If $f$ is not identically null, the existence and uniqueness of o.p.a. follow from the fact that $1-p_n f$ is the vector joining $1$ with its orthogonal projection onto the finite-dimensional subspace $\mathcal{P}_n f$. Already in \cite{BCLSS}, it is hinted that the distribution of zeros of o.p.a. or of $\{1-p_nf\}_{n\in \N}$, where $p_n$ are the o.p.a. to $1/f$, may hide relevant information about the cyclicity of the function $f$ and in the present article our goal is to understand this distribution of the zeros of $1-p_nf$ for large values of $n$ and simple functions $f$. In \cite{BenRMI}, it was shown that the zeros of o.p.a. for functions that have no zeros inside the domain $\D$ but with zeros on its boundary, can only accumulate to points of the boundary $\T$ and that indeed every point of the circle is such an accumulation point. Here we will complete that information, showing that if $f \in \mathcal{P}_d$ is critical (in the sense mentioned before), then (a subsequence of) its o.p.a. $\{p_n\}_{n\in \N}$ have the property that the zeros of $1-p_nf$ asymptotically equidistribute over the unit circle (in the weak sense).

Throughout the present text, we will assume a few properties for our weights:
\begin{definition}
We say that $\omega = \{\omega_k\}_{k\in\mathbb{N}}\subset \mathbb{R}^+$ is a \emph{weight} whenever it is a monotone sequence, normalized to have $\omega_0 = 1$, satisfying 

\begin{equation}\label{ratio-condition}
    \lim_{n \to \infty}  \frac{\omega_n}{ \omega_{n - \sqrt{n}}} = 1,
\end{equation}
as well as
\begin{equation}\label{nondegeneracy}
\sum_{k=1}^\infty \frac{1}{\omega_k} = + \infty.
\end{equation}
\end{definition}

It is relevant to notice that the Dirichlet-type spaces do meet our assumptions. The first condition guarantees that ratios of weights converge sufficiently fast to 1. Moreover, one can check that this condition implies the more natural
\begin{equation}\label{limit_to_1}
    \lim_{k\to\infty} \frac{\omega_k}{\omega_{k+1}} = 1.
\end{equation}
In particular, both the shift and its left-inverse are bounded operators in all $H^2_\omega$ spaces and functions in $H^2_\omega$ are naturally associated to the unit disk as their domain of analyticity.
The condition \eqref{nondegeneracy} is shown in \cite{FMS1} to distinguish between the cases in which simple critical functions like $f(z)=1-z$ are cyclic or not (we choose the case in which they are, when cyclicity is a richer phenomenon). We do need some doubling conditions like monotonicity or \eqref{ratio-condition} for technical reasons, in order to avoid spaces with pathological multiplicative behavior. In addition, notice that monotone sequences meeting \eqref{ratio-condition} and \eqref{nondegeneracy} can't grow too fast: indeed, we must have that for each $\varepsilon >0$ there exists some $C_\varepsilon$ such that
\begin{equation}\label{ratio-growth-condition}
 \sup_{k \in [0,n]}    \frac{\omega_n}{\omega_k} \leq C_\varepsilon n^{1+\varepsilon}.
\end{equation}

Denote by $Z(q)$ the zero set of a polynomial $q$ of degree $d \in \N$, and going forward we denote by $\mu_q$ the measure \[\mu_q:= \frac{1}{d} \sum_{z_j \in Z(q)} \delta_{z_j}.\] Finally, denote by $\nu_E$ the uniform measure over $E$. We say that the zeros of the family of polynomials $\{q_n\}_{n\in \N}$ are \emph{asymptotically equidistributed} over $E$ if $\mu_{q_n}$ converges weakly to $\nu_E$.
 Our main result is the following:

\begin{theorem}\label{mainth}
Let $f$ be a critical polynomial with simple zeros, and $p_n$, the $n$-th o.p.a. to $1/f$. For some subsequence $\{n_k\}_{k\in \N} \subset \N$, the zeros of  $\{1-p_{n_k}f\}_{k\in \N}$ are asymptotically equidistributed on $\T$. 
\end{theorem}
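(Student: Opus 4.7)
The plan is to apply the classical Erd\H{o}s-Tur\'an equidistribution theorem: for a polynomial $P(z)=\sum_{k=0}^N a_k z^k$ with $a_0 a_N\neq 0$, the normalized zero-counting measure of $P$ converges weakly to $\nu_\T$ along any sequence of polynomials with $N\to\infty$ and $\tfrac{1}{N}\log(\|P\|_{\ell^1}/\sqrt{|a_0 a_N|}) \to 0$. Applied to $q_n:=1-p_nf$, of degree $D_n:=\deg p_n + d$, this reduces the theorem to producing a subsequence $\{n_k\}$ along which $\|q_n\|_{\ell^1}$ is polynomially bounded in $D_n$ from above, while $|q_n(0)|$ and $|\operatorname{lead}(q_n)|$ are polynomially bounded from below.

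The bounds on $\|q_n\|_{\ell^1}$ and on $|q_n(0)|$ are unconditional. Taking the test polynomial $q = 0$ in Definition~\ref{opa} yields $\|q_n\|_\omega \leq 1$, so each coefficient $c_k$ of $q_n$ satisfies $|c_k| \leq 1/\sqrt{\omega_k}$, and \eqref{ratio-growth-condition} makes $\|q_n\|_{\ell^1}$ polynomial in $D_n$. For the constant term, the orthogonality $q_n \perp \mathcal{P}_n f$ together with $\omega_0 = 1$ (so that the function $1$ reproduces evaluation at the origin in $H^2_\omega$) gives the clean identity $q_n(0) = \langle q_n, 1\rangle_\omega = \|q_n\|_\omega^2$. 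A polynomial lower bound $\|q_n\|_\omega \gtrsim D_n^{-A}$ follows from a Bernstein-type argument at any boundary zero $\zeta$ of $f$: since $q_n(\zeta) = 1$, while Bernstein's inequality and Cauchy-Schwarz give $\|q_n'\|_\infty \leq D_n \|q_n\|_\infty \leq D_n \sqrt{D_n+1}\, \|q_n\|_{L^2(\T)}$ on $\T$, the set $\{|q_n| \geq 1/2\}$ contains an arc around $\zeta$ of length $\gtrsim 1/(D_n^{3/2} \|q_n\|_{L^2(\T)})$, which forces $\|q_n\|_{L^2(\T)} \gtrsim D_n^{-1/2}$; the weight conditions then transfer this to $\|q_n\|_\omega \gtrsim D_n^{-A}$. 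Hence $\log|q_n(0)|/D_n \to 0$.

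The main obstacle is the leading coefficient. Let $e_n$ denote the component of $z^n f$ orthogonal to $\mathcal{P}_{n-1}f$ inside $\mathcal{P}_n f$. A short computation yields that the coefficient of $z^n$ in $p_n$ equals $\overline{e_n(0)}/\|e_n\|_\omega^2$, together with the Pythagorean identity $\Delta_n := \|q_{n-1}\|_\omega^2 - \|q_n\|_\omega^2 = |e_n(0)|^2/\|e_n\|_\omega^2$, so that $|\operatorname{lead}(p_n)|^2 = \Delta_n/\|e_n\|_\omega^2$. Since $\|e_n\|_\omega^2 \leq \|z^n f\|_\omega^2 = O(\omega_n)$ grows only polynomially in $n$, the task reduces to finding indices with $\log \Delta_n/n \to 0$. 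I handle this by diagonalization: if for some fixed $\varepsilon > 0$ only finitely many $n$ satisfied $\Delta_n \geq e^{-\varepsilon n}$, then summing the tail would give $\|q_N\|_\omega^2 = \sum_{n > N}\Delta_n \leq Ce^{-\varepsilon N}$, contradicting the polynomial lower bound on $\|q_n\|_\omega$ established above. Diagonalizing over $\varepsilon = 1/k$ produces a subsequence $\{n_k\}$ with $\log \Delta_{n_k}/n_k \to 0$ (and automatically $\deg p_{n_k} = n_k$ since $\Delta_{n_k} > 0$), along which the Erd\H{o}s-Tur\'an hypothesis is verified, yielding $\mu_{q_{n_k}} \to \nu_\T$ weakly, as claimed.
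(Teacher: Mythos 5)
Your route is genuinely different from the paper's (which bounds $\max_{|z|=1}|1-p_nf|$ via a Wiener-algebra estimate and controls the leading coefficient through determinant asymptotics of the Gram matrix plus a Kronecker-type argument, Lemmas \ref{d_n-lemma}--\ref{sum_exponentials_lemma}), and most of its ingredients are sound: the identity $q_n(0)=\|q_n\|_\omega^2$, the Bernstein/Cauchy--Schwarz lower bound $\|q_n\|_{L^2(\T)}\gtrsim D_n^{-1/2}$ from $q_n(\zeta)=1$, and the Gram--Schmidt computation $|\mathrm{lead}(q_n)|^2=\Delta_n/\|e_n\|_\omega^2$ with $\Delta_n=\|q_{n-1}\|_\omega^2-\|q_n\|_\omega^2$ are all correct. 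But the pivotal step has a genuine gap: the identity $\|q_N\|_\omega^2=\sum_{n>N}\Delta_n$ is only the telescoping statement $\|q_N\|_\omega^2=\lim_{M\to\infty}\|q_M\|_\omega^2+\sum_{n>N}\Delta_n$, and you have silently dropped the limit term, i.e.\ you have assumed that $\|1-p_nf\|_\omega\to 0$, which is precisely the cyclicity of $f$. Nothing in your argument establishes this: your Bernstein estimate gives a \emph{lower} bound on $\|q_n\|_\omega$, not an upper bound tending to zero. If $\lim_M\|q_M\|_\omega^2=L>0$, then ``only finitely many $n$ with $\Delta_n\geq e^{-\varepsilon n}$'' is perfectly consistent with your lower bound ($\|q_N\|_\omega^2\geq L$ for all $N$), so no contradiction arises and the diagonalization producing the subsequence collapses. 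Tellingly, your proof never uses hypothesis \eqref{nondegeneracy}, which is exactly the assumption the paper imposes so that critical polynomials are cyclic; the paper enters this regime by citing \cite{FMS1} (Theorem 4.1), which gives $\|1-p_nf\|_\omega^2\approx 1/S_{n+d}\to 0$. Your argument becomes correct once you insert that citation (or any proof of cyclicity of critical polynomials under \eqref{nondegeneracy}) to justify the vanishing of the limit term.

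A secondary, fixable inaccuracy: the claimed \emph{polynomial} bounds on $\norm{q_n}_{\ell^1}$, on $\omega_{D_n}^{-1}$ and on $\|e_n\|_\omega^2$ do not follow from \eqref{ratio-growth-condition} for all admissible weights. For decreasing weights, \eqref{ratio-growth-condition} is vacuous and $1/\omega_k$ need not be polynomially bounded (e.g.\ $\omega_k=e^{-k^{1/4}}$ satisfies monotonicity, \eqref{ratio-condition} and \eqref{nondegeneracy}); what monotonicity plus \eqref{ratio-condition} do give is $|\log\omega_n|=o(\sqrt{n})$, hence $\omega_n^{\pm1}=e^{o(n)}$, which is all the Erd\H{o}s--Tur\'an criterion needs. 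With that adjustment and the cyclicity input, your proof goes through and is arguably more elementary than the paper's: it avoids the Wiener-norm theorem, the lower determinant bounds, and the Kronecker approximation step, at the cost of relying on the external norm asymptotics from \cite{FMS1} rather than reproving the relevant estimates internally.
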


Our reasoning will make a strong use of the summary of techniques for (weak) asymptotic equidistribution in \cite{Soundar}. According to E. A. Rakhmanov \cite{Rakh}, the development of the programme to study a cyclic function through its o.p.a. will require understanding the \emph{strong} asymptotics of the zeros, but we consider our contribution an initial step in that direction.
To establish this theorem, we will expand upon the work in \cite{BMS1}, where the boundary behaviour of o.p.a. was studied for $f$, a polynomial of degree $d$ with simple roots. Some results were proved there in full generality and some others, exclusively for the Hardy and Bergman spaces. Here we will need to extend some of those results to general spaces. In particular we will prove results which generalize Theorems 1.7 and 1.8 in \cite{BMS1}. To do so, we need to introduce one more function space. Denote by $\mathcal{A}(\T)$ the Wiener algebra, that is, the space of holomorphic functions over the disk with absolutely summable coefficients, where the norm of a function $g(z) = \sum_{k \in \N} a_k z^k$ is given by \[\|g\|_{\mathcal{A}(\T)} = \sum_{k \in \N} |a_k| < \infty.\] The Wiener algebra is formed by functions with well defined boundary values, and convergence in its norm implies uniform convergence over the closed unit disk. The first result we generalize takes the following form:

\begin{theorem}\label{theorem-1}
Let $f$ be a polynomial with simple zeros such that $Z(f) \cap \mathbb{D} = \emptyset$, and let $p_n$ be the $n$-th o.p.a. to $1/f$ in $H^2_\omega$. Then there exists a constant $C > 0$ such that for all $n \in \mathbb{N}$,
\begin{equation}\label{wiener1}
    \norm{1 - p_n f}_{\mathcal{A}(\T)} \leq C.
\end{equation}
\end{theorem}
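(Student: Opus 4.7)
The plan is to derive an explicit reproducing-kernel representation of $r_n:=1-p_nf$ and read its Wiener norm off coefficient by coefficient. Denote the zeros of $f$ by $\zeta_1,\dots,\zeta_d$, all lying in $\C\setminus\D$. Because the zeros are simple, the affine set $1+\mathcal{P}_n f$ coincides with
\[
    \{\,g\in\mathcal{P}_{n+d}:\,g(\zeta_j)=1,\ j=1,\dots,d\,\},
\]
so $r_n$ is the unique element of that set of minimum $H^2_\omega$-norm. Writing $K_m(z,w)=\sum_{k=0}^m z^k\bar w^k/\omega_k$ for the reproducing kernel of $\mathcal{P}_m\subset H^2_\omega$ and using that evaluation at $\zeta_j$ is represented on $\mathcal{P}_{n+d}$ by $K_{n+d}(\cdot,\zeta_j)$, the standard Lagrange-multiplier computation yields
\[
    r_n(z)=\sum_{j=1}^d c_j^{(n)}\,K_{n+d}(z,\zeta_j),\qquad M^{(n)}\mathbf{c}^{(n)}=\mathbf{1},
\]
where $M^{(n)}_{lj}=K_{n+d}(\zeta_l,\zeta_j)$ is the (positive-definite Hermitian) Gram matrix of the kernels.

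The heart of the argument is the strong diagonal dominance of $M^{(n)}$ for large $n$. Set $S_m:=\sum_{k=0}^m 1/\omega_k$ and $G_m(w):=\sum_{k=0}^m w^k/\omega_k$. For a boundary zero $|\zeta_j|=1$ one has $M^{(n)}_{jj}=S_{n+d}\to\infty$ by \eqref{nondegeneracy}; an exterior zero produces a diagonal growing exponentially in $n$. For $l\neq j$ with both zeros on $\T$, $M^{(n)}_{lj}=G_{n+d}(\zeta_l\bar\zeta_j)$, and Abel summation using the monotonicity of $\omega$ yields
\[
    |G_m(w)|\;\leq\;\frac{2(1+\omega_m^{-1})}{|1-w|}.
\]
If $\omega$ is nondecreasing this is bounded; if $\omega$ is nonincreasing, \eqref{ratio-condition} forces $\omega_k\asymp\omega_m$ uniformly for $k\in[m-\sqrt m,m]$, hence $S_m\geq\sqrt m/(2\omega_m)$, and in either case $|G_m(w)|=o(S_m)$ uniformly for $w$ away from $1$. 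Cross-terms mixing boundary and exterior zeros are controlled by Cauchy--Schwarz on $M^{(n)}$. Consequently $M^{(n)}$ is invertible for $n$ large, with $|c_j^{(n)}|\leq C/M^{(n)}_{jj}$.

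The Wiener bound then follows by coefficient extraction. The formula $\widehat{r_n}(k)=\omega_k^{-1}\sum_j c_j^{(n)}\bar\zeta_j^{\,k}$ gives
\[
    \|r_n\|_{\mathcal{A}(\T)}\;\leq\;\sum_{j=1}^d|c_j^{(n)}|\sum_{k=0}^{n+d}\frac{|\zeta_j|^k}{\omega_k}.
\]
When $|\zeta_j|=1$ the inner sum equals $S_{n+d}$ and exactly cancels $|c_j^{(n)}|\leq C/S_{n+d}$, yielding an $O(1)$ contribution; when $|\zeta_j|>1$ the inner sum is $\asymp|\zeta_j|^{n+d}/\omega_{n+d}$ while $|c_j^{(n)}|\lesssim\omega_{n+d}/|\zeta_j|^{2(n+d)}$, giving a contribution $\lesssim|\zeta_j|^{-(n+d)}\to 0$. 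Summing over $j$ produces \eqref{wiener1} for all sufficiently large $n$, and the finitely many small indices are absorbed into the constant.

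The main technical obstacle is the off-diagonal estimate $|G_m(w)|=o(S_m)$ uniformly for $w\in\T$ away from $1$: this is the quantitative face of diagonal dominance, and it leans essentially on the interplay between \eqref{nondegeneracy} and \eqref{ratio-condition}, the latter preventing $\omega$ from decaying so rapidly that $\omega_mS_m$ could remain bounded. Without either hypothesis, the Gram matrix could degenerate and the sharp asymptotics of $\mathbf{c}^{(n)}$ would be lost.
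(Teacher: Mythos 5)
Your representation of $1-p_nf=\sum_{j}c_j^{(n)}k_{n+d}(\cdot,\zeta_j)$ with $M^{(n)}\mathbf{c}^{(n)}=\mathbf{1}$ is exactly the paper's starting point (Lemma \ref{projections}), and your final coefficient-extraction estimate of the Wiener norm matches the paper's proof of Theorem \ref{theorem-1}. The genuine gap is the step ``consequently $M^{(n)}$ is invertible for $n$ large, with $|c_j^{(n)}|\leq C/M^{(n)}_{jj}$,'' which you attribute to diagonal dominance plus Cauchy--Schwarz. The boundary--boundary block is indeed dominated (off-diagonal entries are $o(S_{n+d})$ against diagonals $S_{n+d}$), and the mixed boundary--exterior entries are relatively small, but the exterior--exterior block is \emph{not} diagonally dominant in any useful sense: for $|\zeta_l|,|\zeta_j|>1$, Lemma \ref{lemma_k} gives $|M^{(n)}_{lj}|\asymp |\zeta_l\zeta_j|^{n+d+1}/\omega_{n+d}$, which is precisely of the order of $\sqrt{M^{(n)}_{ll}M^{(n)}_{jj}}$; Cauchy--Schwarz therefore yields no smallness and cannot exclude asymptotic degeneracy of the Gram matrix, hence gives no control of $\mathbf{c}^{(n)}=(M^{(n)})^{-1}\mathbf{1}$. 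What is actually required---and what the paper devotes Lemma \ref{lemma-3.2} to---is a quantitative lower bound $\det(M^{(n)})\gtrsim S_{n+d}^{d_1}\,\omega_{n+d}^{-(d-d_1)}\prod_l|\zeta_l|^{2(n+d+1)}$, whose proof rests on the fact that the suitably normalized exterior block converges to the matrix $B$ with entries $1/(\overline{\zeta}_m\zeta_l-1)$, a Gram matrix of Szeg\H{o} kernels at the distinct points $1/\overline{\zeta}_l\in\D$ and hence positive definite. Without that nondegeneracy input your coefficient bounds are unsupported; with it, they follow by Cramer's rule as in Lemma \ref{main-lemma}.

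A symptom of the same issue: the asserted bound $|c_j^{(n)}|\leq C/M^{(n)}_{jj}$ is in fact false for exterior zeros once a boundary zero is present. Already for $d=2$ with $\zeta_1\in\T$, $|\zeta_2|>1$, Cramer's rule gives $c_2^{(n)}=\left(S_{n+d}-k_{n+d}(\zeta_2,\zeta_1)\right)/\det M^{(n)}$, and since $|k_{n+d}(\zeta_2,\zeta_1)|\asymp |\zeta_2|^{n+d+1}/\omega_{n+d}$ dominates $S_{n+d}$, one gets $|c_2^{(n)}|\asymp 1/\left(S_{n+d}|\zeta_2|^{n+d+1}\right)$, which is exponentially larger than $1/M^{(n)}_{22}\asymp \omega_{n+d}/|\zeta_2|^{2(n+d+1)}$. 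This does not damage your conclusion---the resulting contribution to the Wiener norm is still $O\left(1/(S_{n+d}\omega_{n+d})\right)=o(1)$, which is exactly the estimate the paper obtains---but it shows the diagonal-dominance heuristic is being used outside its range, and the crucial bound $|c_j^{(n)}|\lesssim 1/S_{n+d}$ for the boundary zeros must be derived from the determinant estimate described above rather than from dominance considerations.
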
 

Notice how \eqref{wiener1} couldn't be improved to the left-hand side converging to 0, since values of $1-p_nf$ at the zeros of $f$ on $\T$ can't converge to 0. 

The second result we need to extend deals with pointwise convergence of $1-p_nf$ outside of the zero set of $f$. The answer is contained in the following theorem:

\begin{theorem}\label{theorem-2}
Let $f$ be a polynomial with simple zeros such that $Z(f) \cap \D = \emptyset$, and let $p_n$ be the $n$-th o.p.a. to $1/f$ in $H^2_\omega$. Then
\begin{equation*}
    1 - p_n f \to 0 \text{ as } n \to \infty,
\end{equation*}
uniformly on compact subsets of $\overline{\D} \backslash Z(f)$.
\end{theorem}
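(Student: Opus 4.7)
My plan combines three ingredients: cyclicity of $f$ (which gives $H^2_\omega$-norm convergence of $1-p_nf$ to zero), the uniform Wiener-algebra bound of Theorem~\ref{theorem-1}, and a passage from interior to boundary.

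Set $g_n := 1-p_n f$. Under the standing assumption \eqref{nondegeneracy}, together with the results of \cite{FMS1}, every polynomial $f$ with no zero in $\D$ is cyclic in $H^2_\omega$, so $\|g_n\|_\omega \to 0$. (The case $Z(f)\cap\overline{\D}=\emptyset$ is immediate, since $1/f$ is then a multiplier; the genuinely critical case is exactly what \eqref{nondegeneracy} is designed to handle.) The growth bound \eqref{ratio-growth-condition} makes point evaluation on $\D$ a bounded functional on $H^2_\omega$, so norm convergence implies $g_n(z)\to 0$ for every $z\in\D$. Theorem~\ref{theorem-1} gives $\|g_n\|_{\mathcal{A}(\T)}\leq C$, hence $\sup_{\overline{\D}}|g_n|\leq C$; Montel's theorem then upgrades pointwise convergence to locally uniform convergence on $\D$, proving the statement for compact subsets of $\D$.

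Now let $\zeta_0 \in \T\setminus Z(f)$ and choose $r>0$ with $\overline{B(\zeta_0, 2r)} \cap Z(f)=\emptyset$; then $|f|\geq\delta>0$ on that disk, and the identity $p_n=(1-g_n)/f$ gives the uniform bound $|p_n|\leq(1+C)/\delta =: M$ on $\overline{B(\zeta_0, 2r)}\cap\overline{\D}$. To pass to uniform convergence on a compact neighborhood of $\zeta_0$ intersected with $\overline{\D}$, I would apply the Cauchy integral formula on a lens-shaped open set $\Omega \subset B(\zeta_0,2r)\cap\D$ whose boundary splits into a compact circular arc $\gamma_{\mathrm{in}}\subset\D$ and a short arc $\gamma_{\mathrm{bd}}\subset \T$ containing $\zeta_0$. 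On $\gamma_{\mathrm{in}}$ the difference $p_n-1/f$ tends to zero uniformly by the previous step, while on $\gamma_{\mathrm{bd}}$ only a uniform pointwise bound is available; a two-constants (harmonic measure) estimate on $\Omega$ then controls $|p_n-1/f|$ at interior points of $\Omega$ bounded away from $\gamma_{\mathrm{bd}}$.

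The delicate issue is reaching target points lying on $\gamma_{\mathrm{bd}}\subset\T$ itself: the Cauchy/harmonic-measure argument degenerates there, and interior control plus a sup-bound alone is insufficient, as the sequence $z^n$ (which tends to zero locally uniformly on $\D$ and has sup-norm one on $\overline{\D}$, yet does not converge on $\T$) shows. The \emph{optimality} of $p_n$ must be exploited: the orthogonality relations $\langle g_n, z^kf\rangle_\omega = 0$ for $0\leq k\leq n$ (which rule out the counterexample $g_n=z^n$) force the coefficients of $g_n$ to satisfy a $(d+1)$-term recurrence whose solutions concentrate the $\mathcal{A}(\T)$-mass of $g_n$ near the points of $Z(f)\cap \T$, at which $g_n=1$. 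Combining this concentration with the partial-fraction expansion $1/f=\sum_{\zeta_j \in Z(f)}(f'(\zeta_j))^{-1}/(z-\zeta_j)$ and the bound of Theorem~\ref{theorem-1} should yield uniform convergence on compact subsets of $\T\setminus Z(f)$, and hence on compact subsets of $\overline{\D}\setminus Z(f)$ via a covering argument. Carrying out this final concentration estimate is the technical heart of the proof.
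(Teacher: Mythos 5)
Your interior analysis (norm convergence via \eqref{nondegeneracy} and \cite{FMS1}, boundedness of point evaluations, the uniform bound from Theorem \ref{theorem-1}, and Montel/Vitali) is correct, but it only yields the easy half of the statement: locally uniform convergence on the open disk $\D$. The theorem asserts uniform convergence on compact subsets of $\overline{\D}\setminus Z(f)$, boundary points included, and this is exactly the part you leave unproved. You are right that the harmonic-measure/lens argument degenerates on $\T$ and that a sup-bound plus interior convergence cannot suffice (your $z^n$ example is apt), and you are right that the optimality of $p_n$ must enter. But the step you defer --- that the orthogonality relations force a ``$(d+1)$-term recurrence'' whose solutions ``concentrate the $\mathcal{A}(\T)$-mass of $g_n$ near $Z(f)\cap\T$'' --- is precisely the technical heart, and it is not carried out; moreover, as stated it is doubtful for general weights, since an exact finite recurrence for the coefficients $d_{k,n}=\frac{1}{\omega_k}\sum_i A_{i,n}\overline{z}_i^k$ holds only when the $\omega_k$ are constant (the Hardy case), so the gap is genuine and not merely a routine verification.

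For comparison, the paper needs no interior/boundary dichotomy at all: by Lemma \ref{projections} one has the closed form
\begin{equation*}
(1-p_nf)(z)=\sum_{i=1}^{d}A_{i,n}\,k_{n+d}(z,z_i),
\end{equation*}
valid for all $z\in\overline{\D}$, and then two uniform estimates finish the proof on any compact $K\subset\overline{\D}\setminus Z(f)$: for circle zeros, $A_{i,n}=O(1/S_{n+d})$ (Lemma \ref{main-lemma}) while the summation-by-parts bound of Lemma \ref{k_small} gives $|k_{n+d}(z,z_i)|\leq \frac{2}{|1-\overline{z}_iz|}\left(\frac{2}{\omega_{n+d}}+1\right)=o(S_{n+d})$ uniformly on $K$ (note this bound is equally valid at boundary points of $K$, which is what your approach lacks); for exterior zeros, $A_{i,n}=O\bigl(1/(S_{n+d}|z_i|^{n+d+1})\bigr)$ while $k_{n+d}(z,z_i)=O\bigl(|z_i|^{n+d+1}/\omega_{n+d}\bigr)$ by Lemma \ref{lemma_k}. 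If you want to rescue your outline, the missing concentration estimate should be replaced by exactly this kernel representation together with Lemmas \ref{k_small} and \ref{main-lemma}; once you have those, the cyclicity, Montel, and harmonic-measure steps become superfluous.
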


In Section \ref{section2} we show how to derive Theorems \ref{theorem-1} and \ref{theorem-2}. These will be used in Section \ref{section3} to establish Theorem \ref{mainth}. These proofs will depend on some technical lemmas that we will state in the relevant place but leave for Section \ref{section4}. We conclude with some remarks on future research on Section \ref{sect5}.

\section{Wiener norm and pointwise convergence}\label{section2}

The spaces we are considering, $H^2_\omega$, are examples of Reproducing Kernel Hilbert Spaces (RKHS) over the disk with an inner product defined by
\begin{equation*}
    \langle f, g \rangle_\omega = \sum_{k=0}^{\infty}a_k\overline{b}_k\omega_k,
\end{equation*}
where $f(z) = \sum_{k=0}^{\infty} a_kz^k$, $g(z) = \sum_{k=0}^{\infty} b_kz^k$. See \cite{FMS1} for the details. RKHS have the special property that norm convergence implies pointwise convergence for points in the common domain of the space (in our case, $\D$).
The reproducing kernel $k(z,b)$ at a point $b \in \D$ is given by
\begin{equation*}
    k(z,b) = \sum_{k=0}^{\infty} \frac{\overline{b}^k z^k}{\omega_k}.
\end{equation*}
Moreover, if we focus on polynomials of degree at most $n$, we can project the reproducing kernel onto $\mathcal{P}_n$, by simply truncating it, to obtain $k_n(z,b)$, with
\begin{equation*}
    k_n(z,b) = \sum_{k=0}^{n} \frac{\overline{b}^k z^k}{\omega_k},
\end{equation*}
which is the reproducing kernel in the subspace $\mathcal{P}_n$ of $H^2_\omega$ (with the inherited norm). We also denote
\begin{equation*}
    S_{n} := \sum_{k=0}^{n} \frac{1}{\omega_k}.
\end{equation*}
Note that $S_{n}$ could be interpreted as the value of $k_n(z,z)$ if this was defined for any $z \in \mathbb{T}$, and it gives us an upper bound for the values that the reproducing kernels $k_n(z,b)$ can attain on the unit disk. From now on, whenever we write $\hat{g}(k)$ we mean the Taylor coefficient of order $k$ of the function $g$, and when we write $v^t$ we mean the transpose of $v$.

A key result about o.p.a. in our context was proved in \cite{BMS1}, Corollary 1.2, which provides a closed formula for all the coefficients of all degrees for $1-p_nf$:
\begin{lemma}\label{projections}
Let $f$ be a monic polynomial of degree $d$ with simple zeros $z_1, \dots, z_{d}$ that lie in $\C \backslash \{0\}$, $p_n$ the $n$-th o.p.a. to $1/f$ in $H^2_\omega$, $d_{k,n} = (\widehat{1-p_nf})(k)$, $v_0 = (1, \dots, 1) \in \C^d$, and $E = E_{Z,n} := (e_{l,m})_{l,m=1}^d$ be the matrix whose coefficients are given by $e_{l,m} = k_{n+d}(z_l, z_m)$. Then there exists a unique vector $A_n = (A_{1,n}, \dots, A_{d,n})$ such that for $k = 0, \dots, n+d$ we have
\begin{equation}
    d_{k,n} = \frac{1}{\omega_k}\sum_{i=1}^d A_{i,n}\overline{z}_i^k.
\end{equation}
Moreover,  \[A_n^t = E^{-1} \cdot v_0^t.\]
\end{lemma}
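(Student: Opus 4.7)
The plan is to exploit a double characterization of $g := 1 - p_n f$ within $\mathcal{P}_{n+d}$. Since $\deg(p_n f)\le n+d$ we have $g\in \mathcal{P}_{n+d}$, and $f(z_i)=0$ forces the interpolation conditions $g(z_i)=1$ for $i=1,\ldots,d$; conversely, any $h\in\mathcal{P}_{n+d}$ with $h(z_i)=1$ for every $i$ must equal $1-qf$ for a unique $q\in\mathcal{P}_n$, since $1-h$ then vanishes at all the zeros of the monic polynomial $f$ and hence is divisible by it, with quotient of degree at most $n$. In parallel, the minimization property defining $p_n$ translates into the orthogonality relations $\langle g, z^j f\rangle_\omega=0$ for $j=0,\ldots,n$.

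The natural ansatz to test is therefore $d_{k,n}=\frac{1}{\omega_k}\sum_{i=1}^d A_i\overline{z}_i^{\,k}$ for $k\le n+d$. Writing $f(z)=\sum_{l=0}^d f_l z^l$, I would first verify by direct computation that any polynomial $r$ with such coefficients automatically satisfies the required orthogonality, since
\[
\langle r, z^j f\rangle_\omega=\sum_{l=0}^d \overline{f_l}\,r_{j+l}\,\omega_{j+l}=\sum_{i=1}^d A_i\overline{z}_i^{\,j}\sum_{l=0}^d \overline{f_l}\,\overline{z}_i^{\,l}=\sum_{i=1}^d A_i\overline{z}_i^{\,j}\,\overline{f(z_i)}=0.
\]

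A dimension count then closes the loop. The orthogonal complement of $\spn\{f, zf,\ldots,z^n f\}$ inside $\mathcal{P}_{n+d}$ has dimension $(n+d+1)-(n+1)=d$, while the ansatz defines a linear map $(A_1,\ldots,A_d)\mapsto r$ whose injectivity follows from a Vandermonde argument on the initial coefficients $r_0,\ldots,r_{d-1}$ (the $\overline{z}_i$ are distinct and nonzero). Its image is thus a $d$-dimensional subspace of that orthogonal complement, and therefore equals it; in particular, $g$ itself must have the claimed form for some unique vector $A_n=(A_{1,n},\ldots,A_{d,n})$.

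Finally, I would pin down $A_n$ via the interpolation conditions. Evaluating the ansatz at $z_m$ yields
\[
1=g(z_m)=\sum_{k=0}^{n+d} r_k z_m^k=\sum_{i=1}^d A_{i,n}\sum_{k=0}^{n+d}\frac{\overline{z}_i^{\,k}z_m^k}{\omega_k}=\sum_{i=1}^d A_{i,n}\,k_{n+d}(z_m, z_i),
\]
which is precisely the linear system $E A_n^t = v_0^t$. The invertibility of $E$ is seen by identifying it with the Gram matrix of the kernel sections $\{k_{n+d}(\cdot, z_i)\}_{i=1}^d$, a linearly independent family (yet again by a Vandermonde argument on their lowest Taylor coefficients). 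The only step requiring care is the dimension count together with the injectivity of the ansatz; everything else is a direct manipulation stitched together by the reproducing kernel identity.
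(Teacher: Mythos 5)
Your proposal is correct: verifying that every function of the form $\sum_{i=1}^d A_i\,k_{n+d}(\cdot,z_i)$ (which is exactly your coefficient ansatz) is orthogonal to $\mathcal{P}_n f$, combining this with the dimension count and the Vandermonde injectivity to identify that span with the orthogonal complement of $\mathcal{P}_n f$ in $\mathcal{P}_{n+d}$, and then evaluating at the zeros to obtain $E A_n^t=v_0^t$ with $E$ an invertible Gram matrix, is a complete argument. The paper itself does not prove this lemma but imports it from \cite{BMS1} (Corollary 1.2), and your reasoning is essentially the standard argument behind that result, so there is nothing to correct.
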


Furthermore, moving forward, we will assume that $f$ has simple zeros $z_1, \dots, z_{d_1} \in \mathbb{T}$, $d_1 \geq 1$ and $z_{d_1 + 1}, \dots, z_{d} \in \mathbb{C}\backslash \overline{\mathbb{D}}$ and that they are ordered, that is, $|z_i| \leq |z_{i+1}|$.

In order to prove Theorems \ref{theorem-1} and \ref{theorem-2}, we need bounds for $A_{i,n}$, to then apply the previous lemma. We need some preliminary results.

First we need to control the size of truncated reproducing kernels, as these provide the elements of the matrix $E$. Whenever $|\overline{z}_j z_i| > 1$, this is contained in the following lemma:

\begin{lemma}\label{lemma_k}
Let $z \in \C$ with $|z| > 1$, and $N \in \N$. Then \[\sum_{k=0}^{N} \frac{z^k}{\omega_k} = C(N,z)\frac{z^{N+1}}{\omega_N}\] where \[\lim_{N\rightarrow\infty} C(N,z) = \frac{1}{z-1}.\]
\end{lemma}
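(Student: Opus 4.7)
The plan is to reindex by setting $j = N+1-k$, which turns the identity into an explicit formula
\begin{equation*}
C(N,z) \;=\; \frac{\omega_N}{z^{N+1}}\sum_{k=0}^{N} \frac{z^k}{\omega_k} \;=\; \sum_{j=1}^{N+1} \frac{\omega_N}{\omega_{N+1-j}}\, z^{-j}.
\end{equation*}
Since $|z|>1$ and $\omega_N/\omega_{N+1-j}\to 1$ as $N\to\infty$ for each fixed $j$ by \eqref{limit_to_1}, the formal pointwise limit of the series is $\sum_{j=1}^\infty z^{-j}=1/(z-1)$. The real work is to justify the interchange of limit and summation in a sum whose length also grows with $N$.

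To do so I would split at $j_0 := \lfloor\sqrt{N}\rfloor$. On the head $1 \leq j \leq j_0$, monotonicity of $\omega$ combined with \eqref{ratio-condition} shows that $\omega_N/\omega_{N+1-j}$ is trapped between $1$ and $\omega_N/\omega_{N-\lfloor\sqrt{N}\rfloor}$ (with the roles of the two bounds reversed in the non-increasing case), and both endpoints tend to $1$. Together with the summability of $|z|^{-j}$, a standard $\varepsilon/2$ argument (splitting again at a fixed large $J$) yields that the head contribution converges to $\sum_{j=1}^\infty z^{-j}=1/(z-1)$. For the tail $j_0 < j \leq N+1$, the bound \eqref{ratio-growth-condition} (or the trivial bound $\leq 1$ if $\omega$ is non-increasing) gives $\omega_N/\omega_{N+1-j}\leq C_\varepsilon N^{1+\varepsilon}$, so
\begin{equation*}
\left|\sum_{j=j_0+1}^{N+1} \frac{\omega_N}{\omega_{N+1-j}}\, z^{-j}\right| \;\leq\; C_\varepsilon\, N^{1+\varepsilon} \cdot \frac{|z|^{-\lfloor\sqrt{N}\rfloor}}{1-|z|^{-1}} \;\longrightarrow\; 0,
\end{equation*}
since $|z|^{-\sqrt{N}}$ decays faster than any polynomial in $N$. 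Adding head and tail gives $C(N,z)\to 1/(z-1)$.

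The main obstacle is that no single dominating sequence controls all $1\leq j\leq N+1$, because the factors $\omega_N/\omega_{N+1-j}$ may grow polynomially in $N$ when $j$ approaches $N+1$. The split scale $\sqrt{N}$ is precisely the scale at which \eqref{ratio-condition} is formulated, and it balances the near-$1$ regime of the weight ratios on the head against the super-polynomial decay $|z|^{-\sqrt{N}}$ on the tail; this is the only place where the hypotheses on the weight are essentially used.
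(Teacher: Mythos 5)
Your proof is correct and follows essentially the same route as the paper: after rewriting $C(N,z)$ as a sum of weight ratios times powers of $1/z$ (your reindexing $j=N+1-k$ is the paper's substitution $x=1/z$), both arguments split at the scale $\sqrt{N}$, control the head via monotonicity together with \eqref{ratio-condition} against the geometric series summing to $1/(z-1)$, and kill the tail using the polynomial bound \eqref{ratio-growth-condition} against the superpolynomial decay of $|z|^{-\sqrt{N}}$.
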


\begin{proof}
Define
\begin{equation*}
    C(N,z) = \frac{\sum_{k=0}^N \frac{z^k}{\omega_k}}{\frac{z^{N+1}}{\omega_N}}  = \sum_{k=0}^N \frac{\omega_Nz^{-N-1+k}}{\omega_k}.
\end{equation*}
Under the change of variables $x= \frac{1}{z}$, we can rewrite this as
\begin{equation*}
    C(N,x)  = \sum_{k=0}^N \frac{\omega_N x^{N+1-k}}{\omega_k} = x \sum_{k=0}^N \frac{\omega_N x^{k}}{\omega_{N-k}},
\end{equation*}
where $|x| < 1$.
Now let $\epsilon > 0$ be fixed. We split the sum in two parts
\begin{equation*}
        C(N,x) = x\sum_{k=0}^{\sqrt{N}}\frac{\omega_N}{\omega_{N-k}}x^k + x^{\sqrt{N} + 1}\sum_{k=\sqrt{N}+1}^{N}\frac{\omega_N}{\omega_{N-k}}x^{k-\sqrt{N}}.
\end{equation*}
Therefore, we can compute the difference
\begin{equation*}
\begin{split}
        \left|C(N,x) - \frac{x}{1-x}\right| \leq \left|C(N,x) - x\sum_{k=0}^{\sqrt{N}}x^k\right| + \left|x\sum_{k=0}^{\sqrt{N}}x^k - \frac{x}{1-x}\right|.
\end{split}
\end{equation*}
The second term of the right-hand side can be made smaller than an arbitrary $\epsilon$ by taking a large value of $N$. Meanwhile, the first term can easily be bounded above by
\begin{equation*}
 |x|\left|\sum_{k=0}^{\sqrt{N}}\left(\frac{\omega_N}{\omega_{N-k}}-1\right)x^k\right| + |x|^{\sqrt{N}+1}\hspace{2pt}\left|\sum_{k=\sqrt{N}+1}^{N}\frac{\omega_N}{\omega_{N-k}}x^{k-\sqrt{N}}\right|.
\end{equation*}
From condition (\ref{ratio-condition}) and the fact that the sequence of weights is monotone, it follows that choosing $N$ large enough, we can derive
\begin{equation*}
    \sup_{k \in \{N - \sqrt{N}, \dots, N\}} \left|\frac{\omega_N}{\omega_{N-k}} - 1 \right| \leq \epsilon(1-|x|).
\end{equation*}
Then we can bound
\begin{equation*}
    |x|\left|\sum_{k=0}^{\sqrt{N}}\left(\frac{\omega_N}{\omega_{N-k}}-1\right)x^k\right| \leq \epsilon.
\end{equation*}

We are finally left with the term on $k \geq \sqrt{N}$, which tends to zero as it is exponentially suppressed in $|x|^{\sqrt{N}+1}$ and the ratio growth is bounded by \eqref{ratio-growth-condition}, therefore, we can make this term smaller than $\epsilon$ for sufficiently large $N$. Reversing the change $x = \frac{1}{z}$ the result follows.
\end{proof}

Notice that here the Szeg\"o kernel (the reproducing kernel for the classical Hardy space $H^2$) plays a universal role among weighted Hardy spaces. We are also interested in the size of $k_n(z_i,z_j)$ when $z_i \neq z_j$, $|\overline{z}_j z_i| \leq 1$. Its size relative to $S_n$ is determined in the next lemma.

\begin{lemma}\label{k_small}
Let $z_1, z_2 \in \overline{\D}$, $z_1 \neq z_2$. 
Then \[\lim_{n \to \infty} \frac{k_n(z_1,z_2)}{S_n} = 0.\]
\end{lemma}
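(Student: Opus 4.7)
The plan is to rewrite $k_n(z_1,z_2) = \sum_{k=0}^n w^k/\omega_k$ where $w := \overline{z_2}z_1$, observe that $|w| = |z_1||z_2| \le 1$ since $z_1, z_2 \in \overline{\D}$, and split the argument according to whether $|w| < 1$ (which happens exactly when at least one of $z_1, z_2$ lies in $\D$) or $|w| = 1$ (both on $\T$, in which case $z_1 \neq z_2$ forces $w \neq 1$). In both cases $S_n \to +\infty$ by \eqref{nondegeneracy}, so it suffices to control the growth of the numerator.

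For the case $|w| < 1$, I would argue by splitting the sum at some fixed $N$. The initial segment $\sum_{k=0}^N w^k/\omega_k$ is bounded by a constant $M_N$ depending only on $N$. For the tail, since $0 \le |w| \le 1$ we have $|w|^k \le |w|^{N+1}$ for $k \ge N+1$, so
\[\Big| \sum_{k=N+1}^n \frac{w^k}{\omega_k}\Big| \le |w|^{N+1}\sum_{k=N+1}^n \frac{1}{\omega_k} \le |w|^{N+1}\, S_n.\]
Dividing by $S_n$ and letting $n \to \infty$ yields $\limsup_n |k_n(z_1,z_2)|/S_n \le |w|^{N+1}$, and then $N \to \infty$ finishes this case.

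For the case $|w| = 1$ with $w \neq 1$, the natural tool is summation by parts. The partial sums $A_k := \sum_{j=0}^k w^j = (1 - w^{k+1})/(1-w)$ are bounded uniformly by $2/|1-w|$, and the sequence $\{1/\omega_k\}$ is monotone by hypothesis, so it has bounded total variation: $\sum_{k=0}^{n-1} |1/\omega_k - 1/\omega_{k+1}| = |1 - 1/\omega_n| \le 1 + 1/\omega_n$. Abel summation then yields a bound of the form $|k_n(z_1,z_2)| \le C_w\, \max(1, 1/\omega_n)$ with $C_w$ independent of $n$. It then remains to verify that $\max(1, 1/\omega_n)/S_n \to 0$, or equivalently, that $\omega_n S_n \to \infty$. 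For this I would exploit the ratio condition \eqref{ratio-condition}: for $k \in [n - \sqrt{n},\, n]$, monotonicity pins $\omega_k/\omega_n$ between $1$ and $\omega_{n-\sqrt{n}}/\omega_n$, and by \eqref{ratio-condition} this window collapses to $1$, so $1/\omega_k \ge 1/(2\omega_n)$ uniformly on this range for $n$ large. Summing over a window of roughly $\sqrt{n}$ indices gives $S_n \ge \sqrt{n}/(2\omega_n)$, hence $\omega_n S_n \ge \sqrt{n}/2 \to \infty$, which combined with the Abel bound closes the proof.

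I expect the principal obstacle to be the unimodular case $|w| = 1$ when $\omega_n$ is decreasing to zero. Geometric decay is unavailable there, so the only source of smallness is the cancellation in the oscillatory sum, while the denominator $S_n$ need not grow faster than $1/\omega_n$. Extracting an extra factor of $\sqrt{n}$ from the ratio condition is precisely what rescues the argument, and this is where the tight-ratio hypothesis \eqref{ratio-condition} is used most essentially.
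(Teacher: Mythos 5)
Your proof is correct and follows essentially the same route as the paper: summation by parts against the bounded partial sums of $(\overline{z}_2 z_1)^k$, monotonicity of the weights to control the total variation of $1/\omega_k$, and the fact that $\omega_n S_n \to \infty$ to absorb the $1/\omega_n$ term. The only cosmetic differences are that the paper runs the Abel argument uniformly in both cases (note $z_1 \neq z_2$ in $\overline{\D}$ already forces $\overline{z}_2 z_1 \neq 1$, so your separate geometric-tail case is not needed) and justifies $\omega_n S_n \to \infty$ tersely via \eqref{limit_to_1}, of which your $\sqrt{n}$-window estimate from \eqref{ratio-condition} is a more quantitative rendering of the same step.
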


\begin{proof}
By definition $\left|k_n(z_1,z_2) \right| = \left|  \sum_{k=0}^n \frac{\overline{z}_2^k z_1^k}{\omega_k}  \right|$. Applying summation by parts and calling $A_l = \sum_{k=0}^l \overline{z}_2^k z_1^k = \frac{1-\overline{z}_2^{l+1}z_1^{l+1}}{1-\overline{z}_2 z_1}$, it follows that
\begin{align*}
\left|  \sum_{k=0}^n \frac{\overline{z}_2^k z_1^k}{\omega_k}  \right| &= \left|\frac{A_n}{\omega_n} - \sum_{k=0}^{n-1}A_k\left(\frac{1}{\omega_{k+1}} - \frac{1}{\omega_k}\right)\right| \\
& \leq \frac{2}{|1-\overline{z}_2 z_1|}\left(\frac{1}{\omega_n} + \sum_{k=0}^{n-1}\left|\frac{1}{\omega_{k+1}} - \frac{1}{\omega_k}\right|\right).
\end{align*}
Using that the sequence of weights is monotone, we can see
\begin{equation*}
\left|k_n(z_1,z_2) \right| \leq  \frac{2}{|1-\overline{z}_2 z_1|}\left(\frac{2}{\omega_n} + 1 \right).
\end{equation*}
From the limit condition (\ref{limit_to_1}) and the divergence of $S_n$ as $n \to \infty$, the fraction on the right-hand side  goes to zero.
\end{proof}

We can now state an estimate the size of $A_{i,n}$. Its proof will require the use of the intermediate Lemma \ref{lemma-3.2}. 

\begin{lemma}\label{main-lemma}
As $n \to \infty$, the coefficients $A_{i,n}$ meet the following rates of decay:
\begin{align*}
    A_{i,n} \in 
     \begin{cases}
       O\left(\frac{1}{S_{n+d}}\right) \text{ for } 1 \leq i \leq d_1 \\
       O\left(\frac{1}{S_{n+d}|z_i|^{n+d+1}}\right) \text{ for } d_1 < i \leq d.\\
     \end{cases}
\end{align*}
Consequently, for each $1 \leq i \leq d_1$, there exists a constant $C_i$, independent of $n$, such that
\begin{equation*}
    \sum_{k=0}^{n+d}\left|A_{i,n}\frac{\overline{z_i}^k}{w_k} \right| \leq C_i,
\end{equation*}
while for $d_1 < i \leq d$, we have
\begin{equation*}
    \sum_{k=0}^{n+d}\left|A_{i,n}\frac{\overline{z_i}^k}{w_k} \right| \to 0
\end{equation*}
as $n \to \infty$.
\end{lemma}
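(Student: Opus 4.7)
The plan is to derive the decay rates directly from the linear system of Lemma \ref{projections}, $E A_n^t = v_0^t$, by analyzing the matrix $E=(k_{n+d}(z_l,z_m))_{l,m}$ through a block decomposition reflecting the split between boundary zeros ($1\leq i\leq d_1$) and exterior zeros ($d_1<i\leq d$). Partition $E=\bigl(\begin{smallmatrix} E_{11} & E_{12} \\ E_{21} & E_{22} \end{smallmatrix}\bigr)$ and $v_0^t=(w_1,w_2)^t$ accordingly. Lemma \ref{lemma_k} supplies precise asymptotics for every entry $k_{n+d}(z_l,z_m)$ with $|\overline{z}_m z_l|>1$ (so for all of $E_{12}$, $E_{21}$, $E_{22}$), while Lemma \ref{k_small} controls the off-diagonal entries inside the boundary block. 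Consequently, the diagonal of $E_{11}$ equals $S_{n+d}$ and its off-diagonal is $o(S_{n+d})$; every cross-block entry is asymptotic to $(z_l\overline{z}_m)^{n+d+1}/(\omega_{n+d}(z_l\overline{z}_m-1))$; and entries of $E_{22}$ scale as $|z_l z_m|^{n+d+1}/\omega_{n+d}$.

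Introduce the real diagonal scaling $D=\mathrm{diag}(D_{ii})$ with $D_{ii}=\sqrt{S_{n+d}}$ for $i\leq d_1$ and $D_{ii}=|z_i|^{n+d+1}/\sqrt{\omega_{n+d}}$ for $i>d_1$, and pass to $\tilde E = D^{-1}E D^{-1}$. Then $\tilde E_{11}\to I_{d_1}$; the cross entries of $\tilde E$ have modulus $O(1/\sqrt{S_{n+d}\,\omega_{n+d}})$, which vanishes because the auxiliary Lemma \ref{lemma-3.2} ensures $S_{n+d}\,\omega_{n+d}\to\infty$; and the bounded entries of $\tilde E_{22}$ carry oscillating phases $(z_i/|z_i|)^{n+d+1}$. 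Absorbing the latter into a diagonal unitary $U$ (which preserves the spectrum), $U^{\ast}\tilde E_{22}U$ converges to the fixed Cauchy-type Hermitian matrix $(1/(z_l\overline{z}_m-1))_{l,m>d_1}$, which is positive definite. Combining these three observations yields the uniform bound $\|\tilde E^{-1}\|=O(1)$, and hence the entrywise estimate $(E_{22}^{-1})_{ij}=O(\omega_{n+d}/|z_i z_j|^{n+d+1})$.

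With this in hand, apply block inversion. The Schur complement $\Sigma:=E_{11}-E_{12}E_{22}^{-1}E_{21}$ has $E_{12}E_{22}^{-1}E_{21}=O(1/\omega_{n+d})$ by a routine entrywise computation, so $\Sigma=S_{n+d}(I_{d_1}+o(1))$; this yields $A^{(1)}=\Sigma^{-1}(w_1-E_{12}E_{22}^{-1}w_2)$ componentwise of size $O(1/S_{n+d})$ (the subtracted term is exponentially small). For the exterior coordinates, write $A^{(2)}=E_{22}^{-1}w_2-E_{22}^{-1}E_{21}A^{(1)}$: the first summand is $O(\omega_{n+d}/(|z_i|^{n+d+1}|z_{d_1+1}|^{n+d+1}))$, which is exponentially smaller than the target, while the second, after cancelling $|z_k|^{n+d+1}$ between $(E_{22}^{-1})_{ik}$ and $(E_{21})_{kj}$, collapses to exactly $O(1/(S_{n+d}|z_i|^{n+d+1}))$.

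The summatory consequences are then immediate: for $1\leq i\leq d_1$, $\sum_{k=0}^{n+d}|A_{i,n}\overline{z}_i^k/\omega_k|=|A_{i,n}|\,S_{n+d}=O(1)$; for $i>d_1$, Lemma \ref{lemma_k} gives $\sum_{k=0}^{n+d}|z_i|^k/\omega_k\sim C_i|z_i|^{n+d+1}/\omega_{n+d}$, so the total is $O(1/(S_{n+d}\,\omega_{n+d}))\to 0$ by Lemma \ref{lemma-3.2}. The principal obstacle in this program is the uniform invertibility of $\tilde E_{22}$: because its entries do not converge individually (the phases oscillate with $n$), one cannot take a naive limit, and the argument must exploit that the spectrum is invariant under diagonal unitary conjugation to reduce the problem to a single fixed positive-definite Cauchy-type Gram matrix. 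Everything else is careful bookkeeping with the asymptotics furnished by Lemmas \ref{lemma_k}, \ref{k_small}, and \ref{lemma-3.2}.
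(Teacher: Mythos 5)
Your argument is correct, but it follows a genuinely different route from the paper. The paper proves the decay rates via Cramer's rule, writing $A_{i,n}=\det(E^{(i)})/\det(E)$, expanding $\det(E^{(i)})$ over permutations (as in Lemma \ref{lemma-3.2}) to get an upper bound, and dividing by the lower bound for $\det(E)$ supplied by Lemma \ref{lemma-3.2}; the whole burden is carried by determinant bookkeeping. You instead solve the system $EA_n^t=v_0^t$ by a block decomposition: the diagonal rescaling $D$ together with the observation that the oscillating phases in the exterior block can be absorbed into a diagonal unitary (so that $\tilde E_{22}$ is, up to $o(1)$, unitarily equivalent to the fixed positive-definite Cauchy-type matrix $B$) gives $\|\tilde E_{22}^{-1}\|=O(1)$ and hence the entrywise bound on $E_{22}^{-1}$; the Schur complement then yields $\Sigma=S_{n+d}(I_{d_1}+o(1))$ and the two componentwise estimates, and I checked that the exponent bookkeeping in both summands of $A^{(2)}$ (the cancellation of $|z_k|^{n+d+1}$, and the exponential smallness of $E_{22}^{-1}w_2$ against the target, using that $\omega_n$ and $S_n$ grow at most polynomially by \eqref{ratio-growth-condition}) is right. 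What your approach buys is a structural explanation --- positive definiteness of $B$ enters through an operator-norm bound rather than through a determinant lower bound, and you correctly flag that the naive bound $\|\tilde E^{-1}\|=O(1)$ alone would be too weak for the exterior coefficients, which is why the Schur step is needed; what the paper's approach buys is that the same permutation expansion is reused almost verbatim later (Lemma \ref{E_inverse} and Lemma \ref{d_n-lemma} need the cofactor asymptotics, not just bounds), so the determinant machinery is not wasted effort there.

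One small repair: you attribute the fact $S_{n+d}\,\omega_{n+d}\to\infty$ to Lemma \ref{lemma-3.2}, which does not state it. The fact is true, but it needs its own one-line justification: for monotone increasing weights it is immediate from $\omega_n\geq 1$ and \eqref{nondegeneracy}, while for decreasing weights one uses \eqref{ratio-condition} and monotonicity to see that $\omega_nS_n\geq\sum_{k=n-\sqrt n}^{n}\omega_n/\omega_k\gtrsim\sqrt n$. With that inserted, both occurrences in your argument (the vanishing of the scaled cross-block entries and the final limit $O(1/(S_{n+d}\omega_{n+d}))\to 0$) go through.
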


Before we can show Lemma \ref{main-lemma} we need to estimate determinants, in particular of the matrix $E$ in the statement of the Lemma \ref{projections}, \emph{from below}. This is reasonable since $E$ is a positive definite matrix but estimating determinants from below is necessarily painful. This use of the determinants to bound $A_{i,n}$ is in the spirit of what's done in Lemma 3.3 and Lemma 5.4 from \cite{BMS1} for the $H^2$ and $A^2$ spaces respectively. Our lower estimate for the determinant is also based on a similar technique in Lemma 3.2 from the same article. 

\begin{lemma}\label{lemma-3.2}
Let $1 \leq d_1 \leq d$ be integers, and $\{z_i\}_{i=1}^d \subset \mathbb{C}$ be distinct points with $|z_i| = 1$ for $1 \leq i \leq d_1$ and $|z_i| > 1$ for $d_1 < i \leq d$. If $E := (e_{l,m})^d_{l,m=1}$ with $e_{l,m} = k_{n+d}(z_l, z_m)$, then there exists a constant $\delta > 0$, independent of n, such that for every n,
\begin{equation}
    \det(E) \geq \delta \frac{S_{n+d}^{d_1}}{w_{n+d}^{d - d_1}} \prod_{l = 1}^d |z_l|^{2(n+d+1)}.
\end{equation}
\end{lemma}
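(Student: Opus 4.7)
The plan is to extract the dominant scaling by a diagonal similarity, reduce the question to a $2\times 2$ block matrix, and evaluate its determinant by a Schur complement. Define the diagonal matrix $D$ by $D_{ll}=1$ for $1\le l\le d_1$ and $D_{ll}=z_l^{n+d+1}$ for $d_1<l\le d$, and set $\widetilde F := D^{-1} E (D^*)^{-1}$. Since $|z_l|=1$ for $l\le d_1$,
$$\det E \;=\; |\det D|^2\det\widetilde F \;=\; \Bigl(\prod_{l=1}^d |z_l|^{2(n+d+1)}\Bigr)\det\widetilde F,$$
so it suffices to prove $\det\widetilde F \ge c\,S_{n+d}^{d_1}/\omega_{n+d}^{d-d_1}$ for some $c>0$, uniformly in $n$ large. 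The finitely many remaining values of $n$ are absorbed into $\delta$ using that $E$ is a Gram matrix of the truncated reproducing kernels $k_{n+d}(\cdot,z_l)$, which are linearly independent because the $z_l$ are distinct; so $\det E>0$ for every $n$.

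Partition $\widetilde F=\bigl(\begin{smallmatrix}A & B\\ C & G\end{smallmatrix}\bigr)$ according to the split $l\le d_1$ versus $l>d_1$. Since $k_{n+d}(z_l,z_l)=S_{n+d}$ whenever $|z_l|=1$, while Lemma \ref{k_small} forces every off-diagonal entry of $A$ to be $o(S_{n+d})$, we obtain $A=S_{n+d}(I_{d_1}+o(1))$. For the three remaining blocks, every entry is $k_{n+d}(z_l,z_m)$ with $|\bar z_m z_l|>1$, so Lemma \ref{lemma_k} applies; after the cancellations produced by the scaling $D$ and $D^*$, the entries of $B$ and $C$ are $O(\omega_{n+d}^{-1})$, and $\omega_{n+d}\,G$ converges entrywise to the matrix $M$ with entries $(\bar z_m z_l-1)^{-1}$, $l,m>d_1$.

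The crucial structural step is to observe that $M$ is, after factoring $1/\bar z_m$ out of each column, the Cauchy matrix in the nodes $\{z_l\}_{l>d_1}\subset\C\setminus\overline{\D}$ and $\{1/\bar z_m\}_{m>d_1}\subset\D$; these two sets are automatically disjoint (by the modulus constraint) and each consists of distinct points, so the Cauchy determinant is nonzero. Consequently $|\det G|\ge c'\omega_{n+d}^{-(d-d_1)}$ for all sufficiently large $n$. The Schur complement identity $\det\widetilde F=\det A\cdot\det(G-CA^{-1}B)$ now closes the argument: the estimate $A^{-1}=S_{n+d}^{-1}(I+o(1))$ combined with the sizes of $B$ and $C$ gives that $CA^{-1}B$ has entries $O(S_{n+d}^{-1}\omega_{n+d}^{-2})$, and this is $o(\omega_{n+d}^{-1})$ because $S_{n+d}\to\infty$ by \eqref{nondegeneracy}; hence $CA^{-1}B$ is a negligible perturbation of $G$ and $\det(G-CA^{-1}B)\ge \tfrac12|\det G|$ eventually.

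I expect the main obstacle to be identifying the correct normalization that turns $\omega_{n+d}\,G$ into a nondegenerate Cauchy-type limit, and checking that its two node sets are disjoint — this is where the hypothesis $|z_l|>1$ for $l>d_1$ is genuinely used. Once that point is in place, the block structure, the diagonal dominance of $A$, and the fact that $S_{n+d}\to\infty$ make the Schur complement estimate essentially mechanical.
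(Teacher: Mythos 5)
Your proof is correct, but it is organized quite differently from the paper's. The paper expands $\det(E)$ directly via the Leibniz formula, classifies permutations by how many of the indices $1,\dots,d_1$ they fix, uses Lemma \ref{k_small} and Lemma \ref{lemma_k} to identify the dominant term $S_{n+d}^{d_1}\,\omega_{n+d}^{-(d-d_1)}\prod_l|z_l|^{2(n+d+1)}$, and shows the remaining factor converges to $\det(B)$ with $b_{l,m}=(\overline{z}_m z_l-1)^{-1}$, whose positivity is quoted from \cite{BMS1}; positivity of the Gram determinant then yields the uniform lower bound. You instead conjugate by the diagonal matrix $D$, split into a $2\times 2$ block structure, and use the Schur complement, with the same two kernel lemmas supplying the block asymptotics ($A=S_{n+d}(I+o(1))$, $\omega_{n+d}G\to M$, off-diagonal blocks $O(\omega_{n+d}^{-1})$). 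Your packaging is arguably cleaner: the scaling $\prod_l|z_l|^{2(n+d+1)}$ comes out in one stroke as $|\det D|^2$, the competition between $S_{n+d}$ and $\omega_{n+d}^{-1}$ is localized in two determinant factors rather than spread over permutation classes, and the nondegeneracy of the limit matrix $M$ is proved self-containedly via the Cauchy determinant (nodes $\{z_l\}_{l>d_1}$ outside $\overline{\D}$ versus $\{1/\overline{z}_m\}_{m>d_1}$ inside $\D$) instead of citing \cite{BMS1}; the Hermitian/Gram structure also gives you positivity of all the determinants involved for free, which you correctly use to absorb small $n$ and to turn the modulus bound into the stated inequality.

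One justification needs repair, though it is easily fixed and does not break the argument: you claim the entries of $CA^{-1}B$, of size $O\bigl(S_{n+d}^{-1}\omega_{n+d}^{-2}\bigr)$, are $o(\omega_{n+d}^{-1})$ ``because $S_{n+d}\to\infty$.'' What is actually needed is $S_{n+d}\,\omega_{n+d}\to\infty$, and since the weight may be decreasing (e.g.\ the Bergman weight, where $\omega_n\to 0$), divergence of $S_{n+d}$ alone does not suffice. The fact is nevertheless true under the paper's hypotheses: for nondecreasing weights $\omega_n\geq 1$ and $S_n\to\infty$ by \eqref{nondegeneracy}, while for nonincreasing weights monotonicity and \eqref{ratio-condition} give $S_n\omega_n\geq\sum_{k=n-\sqrt{n}}^{n}\omega_n/\omega_k\geq\sqrt{n}\,(1+o(1))$. (The paper itself uses $1/(S_{n+d}\omega_{n+d})\to 0$ in the proof of its Lemma on the coefficients $A_{i,n}$.) With that line added, and the trivial remark that the blocks $B,C,G$ are empty when $d_1=d$, your argument is complete.
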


\begin{proof}
In what follows, we use the notation $\mathscr{S}$ to denote the set of all permutations of the indices $\{1,\dots, d\}$, $\sgn{(\sigma)}$ to denote the parity of a particular permutation and $id$ to refer to the identity permutation. By the definition of determinant,
\begin{equation*}
    \det{(E)} = \sum_{\sigma \in \mathscr{S}}\left[\sgn{(\sigma)}\prod_{l=1}^d e_{l, \sigma(l)}\right] = \sum_{\sigma \in \mathscr{S}}\left[\sgn{(\sigma)}\prod_{l=1}^d \left(\sum_{k=0}^{n+d} \frac{\overline{z}_{\sigma(l)}^k z_l^k }{\omega_k}\right)\right].
\end{equation*}

We can decompose this sum depending on the number of indices that a given permutation fixes. Recall that when $i = 1, \dots, d_1$ then $|z_i| = 1$ while $|z_i| > 1$ otherwise. Let $\mathscr{A}$ be the set of permutations such that $\sigma(i) = i$ for every $1 \leq i \leq d_1$ and for each $0 \leq j \leq d_1$ let $\mathscr{B}_j$ be the set of permutations that fix exactly $j$ of the indices in the set ${1, \dots, d_1}$. Then,
\begin{align*}
    \det{(E)} = \prod_{l=1}^d \left(\sum_{k=0}^{n+d} \frac{|z_l|^{2k}}{\omega_k}\right) + \sum_{\sigma \in \mathscr{A}\backslash \{id\}} \sgn{(\sigma)} \prod_{l=1}^d \left(\sum_{k=0}^{n+d} \frac{\overline{z}_{\sigma(l)}^k z_l^k}{\omega_k}\right) \\
    + \sum_{j=0}^{d_1 - 1} \sum_{\sigma \in \mathscr{B}_j} \sgn{(\sigma)}\prod_{l=1}^d \left(\sum_{k=0}^{n+d} \frac{\overline{z}_{\sigma(l)}^k z_l^k}{\omega_k}\right).
\end{align*}

Now, we can first study the sums inside the products. When $l = \sigma(l) \leq d_1$, then 
\begin{equation*}
    \sum_{k=0}^{n+d} \frac{|z_l|^{2k}}{\omega_k} = S_{n+d}.
\end{equation*}
Meanwhile, when $l \not = \sigma(l)$ but both are smaller than or equal to $d_1$, we can invoke Lemma \ref{k_small}, yielding
\begin{equation*}\sum_{k=0}^{n+d} \frac{\overline{z}_{\sigma(l)}^k z_l^k}{\omega_k} \in o(S_{n+d})
\end{equation*}
as $n$ grows to $\infty$. 
Finally, if $l$ or $\sigma(l)$ is bigger than $d_1$, then $|\overline{z}_{\sigma(l)} z_l| > 1$, therefore, using Lemma \ref{lemma_k}, we get 
\begin{equation*}
    \sum_{k=0}^{n+d} \frac{\overline{z}_{\sigma(l)}^k z_l^k}{\omega_k} = C(l, \sigma, n) \frac{\overline{z}_{\sigma(l)}^{n+d+1} z^{n+d+1}_l }{\omega_{n+d}},
\end{equation*}
where
\begin{equation*}
    C(l,\sigma,n) \to \frac{1}{\overline{z}_{\sigma(l)} z_{l} - 1}, \text{ as }n \to \infty.
\end{equation*}

Therefore, the first summand in the expression for the determinant is
\begin{equation}
    \frac{S_{n+d}^{d_1}}{\omega_{n+d}^{d-d_1}} \left(\prod_{l=d_1 + 1}^d |z_l|^{2(n+d+1)}C(l,id, n) \right).
\end{equation}
We can also compute the second summand corresponding to the permutations in $\mathscr{A}$,
\begin{equation}\label{sum-A}
    \sum_{\sigma \in \mathscr{A}\backslash \{id\}} \sgn{(\sigma)} \frac{S_{n+d}^{d_1}}{\omega_{n+d}^{d-d_1}}\left(\prod_{l=d_1 + 1}^d (\overline{z}_{\sigma(l)} z_l)^{n+d+1}C(l,\sigma, n)  \right).
\end{equation}

Finally, the summand corresponding to the permutations in $\mathscr{B}_j$ consist of sums similar to those in (\ref{sum-A}), except involving powers of $S_{n+d}^j$ and products over some subsets of indices $l$.

Notice that for $\sigma \in \mathscr{A}$, since $\sigma$ is bijective from $\{d_1 + 1, \dots, d\}$ to itself, we have
\begin{equation*}
    \prod_{l=d_1 + 1}^d (\overline{z}_{\sigma(l)} z_l)^{n+d+1} = \prod_{l = d_1 + 1}^d |z_l|^{2(n+d+1)} = \prod_{l = 1}^d |z_l|^{2(n+d+1)}.
\end{equation*}
Therefore, we see that the determinant of E is the product of two factors, first one being
\begin{equation*}
\frac{S_{n+d}^{d_1}}{\omega_{n+d}^{d - d_1}} \prod_{l = 1}^d |z_l|^{2(n+d+1)}\end{equation*} and the second one being \begin{equation}\label{eqn1001}
     \prod_{l = d_1 + 1}^d C(l, id, n) + \sum_{\sigma \in \mathscr{A}\backslash \{id\}} \sgn{(\sigma)} \prod_{l = d_1 + 1}^d C(l, \sigma, n) + r(n),
\end{equation}
where $r(n) \to 0$ as $n \to \infty$. Note that $E$ is a Gram matrix, and $\det(E) > 0$ for all $n$. On the other hand as $n \rightarrow \infty$, $r(n) \to 0$, and thus we get that the limit of \eqref{eqn1001} coincides with the determinant of the matrix $B := (b_{l,m})_{l,m = d_1 + 1}^d$, defined by $b_{l,m}= \frac{1}{\overline{z}_m z_l - 1}$ and, as was shown in \cite{BMS1}, this is positive definite so that $\det(B) > 0$. Therefore as $\det(E) > 0$, the second factor is strictly positive for all $n$ and converges to $\det(B) > 0 $, so it is bounded below by a constant $\delta > 0$. Hence we obtained the desired result. \end{proof}

With this control on the determinant, we are finally ready to give bounds for the size of $A_{i,n}$. We will combine Lemmas \ref{lemma_k}, \ref{k_small},  and \ref{lemma-3.2} and the proof of a similar result in \cite{BMS1}.

\begin{proof}[Proof of Lemma \ref{main-lemma}.] The coefficients $A_{i,n}$ are the solutions to the linear system $E \cdot A_n^t = v_0^t$, where $v_0 := (1, \dots, 1) \in \mathbb{C}^d$. Therefore, by Cramer's rule we have that $A_{i,n} = \frac{\det(E^{(i)})}{\det(E)}$ where $E^{(i)}$ is obtained replacing the i-th column of E by $v_0^t$.

Now if $1 \leq 1 \leq d_1$, arguing as in Lemma \ref{lemma-3.2}, in all the sums $\sigma(l) \not = i$, the highest power of $S_{n+d}$ that can appear in any term of the expression of $\det(E^{(i)})$ is $S_{n+d}^{d_1 - 1}$, multiplied by a product that is bounded above by a constant multiple of $\frac{1}{\omega_{n+d}^{d - d_1}}\prod_{l=1}^d |z_l|^{2(n+d+1)}$. There thus exists a positive constant $C_1$ such that 
\begin{equation*}
    |\det(E^{(i)})| \leq C_1 \frac{S_{n+d}^{d_1-1}}{w_{n+d}^{d - d_1}} \prod_{l = 1}^d |z_l|^{2(n+d+1)}.
\end{equation*}
Applying Lemma \ref{lemma-3.2} gives that for $1 \leq i \leq d_1$, we have that $A_{i,n} \in O\left(\frac{1}{S_{n+d}}\right)$.

In the case $ d_1 < i \leq d$ arguing as in Lemma \ref{lemma-3.2},
\begin{eqnarray}
\label{eq-id2} \det{(E^{(i)})} = \frac{S_{n+d}^{d_1}}{\omega_{n+d}^{d-d_1+1}}\prod_{\substack{l = d_1 + 1 \\ l \not = i}}^d |z_l|^{2(n+d+1)} C(l,id,n) \\ \label{eq-A2}
+\sum_{\sigma \in \mathscr{A} \backslash \{id\}} \sgn{(\sigma)} \frac{S_{n+d}^{d_1}}{\omega_{n+d}^{d-d_1+1}} \prod_{\substack{l = d_1 + 1 \\ \sigma(l) \not = i}}^d (\overline{z}_{\sigma(l)}z_l)^{n+d+1} C(l,\sigma,n) \\ \label{eq-R2}
+ R(n)
\end{eqnarray}
where $R(n)$ denotes the remainder terms. Now, (\ref{eq-id2})  is missing a term of order $\frac{1}{w_{n+d}}|z_i|^{2(n+d+1)}$, while in (\ref{eq-A2}) each product is missing a term of order $\frac{1}{w_{n+d}}|\overline{z}_i z_{i^*}|^{n+d+1}$ where $i^* = \sigma^{-1}(i) >d_1$. Finally, in (\ref{eq-R2}), the highest power of $S_{n+d}$ that appears is $S_{n+d}^{d_1 - 1}$, and each product is missing at least one term of order $|z_i|^{n+d+1}$.  Therefore after dividing by $\det{(E)}$ and using Lemma \ref{lemma-3.2} we can conclude that $A_{i,n}$ has an order of decay at most
\begin{equation*}
    O\left(\frac{w_{n+d}}{|z_i|^{2(n+d+1)}} + \frac{\omega_{n+d}}{|z_i|^{n+d+1} |z_{d_1 + 1}|^{n+d+1}} + \frac{1}{S_{n+d}|z_i|^{n+d+1}}\right),
\end{equation*}
where we have used that $|z_{d_1+1}| \leq |z_{d_1 + j}|$, $j = 1, \dots, d-d_1$ combined with $\sigma^{-1}(i) > d_1$.
Therefore, given that the growth of $w_{n+d}$ is not exponential, it follows that
$A_{i,n} \in O\left( \frac{1}{S_{n+d}|z_i|^{n+d+1}}\right)$ as desired.

For the second part of the lemma for $1 \leq i \leq d_1$, there is a constant $C_i$ such that
\begin{equation*}
    \sum_{k=0}^{n+d} \left|A_{i,n} \frac{\overline{z}_i^k}{\omega_k}\right| \leq \frac{C_i}{S_{n+d}} \sum_{k=0}^{n+d} \frac{|\overline{z}_i|^k}{w_k} = C_i,
\end{equation*}
and for $d_1 < i \leq d$ we have, using Lemma \ref{lemma_k},
\begin{equation*}
    \sum_{k=0}^{n+d} \left|A_{i,n} \frac{\overline{z}_i^k}{\omega_k}\right| = |A_{i,n}| C(n+d,|\overline{z}_i|) \frac{|\overline{z}_i|^{n+d+1}}{\omega_{n+d}}
\end{equation*}
where $C(n+d,|\overline{z}_i|) \to \frac{1}{|\overline{z}_i| - 1}$. Hence, $\frac{1}{S_{n+d} \omega_{n+d}} \to 0$ as $n\rightarrow \infty$, implying
\begin{equation*}
        |A_{i,n}| C(n+d,|\overline{z}_i|) \frac{|\overline{z}_i|^{n+d+1}}{\omega_{n+d}} \to 0.
\end{equation*}
\end{proof}

Now we are ready to show the validity of Theorem \ref{theorem-1}.
\begin{proof}[Proof of Theorem \ref{theorem-1}.]
Recall from Lemma \ref{projections} that $(1-p_nf)(z) = \sum_{k=0}^{n+d} d_{k,n} z^k$, where $d_{k,n} = \frac{1}{\omega_k}\sum_{i=1}^d A_{i,n} \overline{z}_i^k$. Therefore, we can estimate the Wiener norm as:
\begin{equation*}
    \norm{1 - p_n f}_{\mathcal{A}(\T)} \leq \sum_{i=1}^d \sum_{k=0}^{n+d} \left|A_{i,n} \frac{\overline{z}_i^k}{\omega_k}\right|.
\end{equation*}
Now invoking Lemma \ref{main-lemma}, we can conclude that
\begin{equation*}
    \norm{1 - p_n f}_{\mathcal{A}(\T)} \leq \sum_{i=1}^{d_1} C_i + o(1) \leq C < \infty
\end{equation*}
for some positive constant $C$, as desired.
\end{proof}

The proof of Theorem \ref{theorem-2} can also be performed at this point.
\begin{proof}[Proof of Theorem \ref{theorem-2}.]
Let $K \subset \overline{\mathbb{D}} \backslash \{z_1, \dots, z_d\}$ be a compact set. Then, by Lemma $\ref{projections}$, for each $z \in K$ we have
\begin{align*}
    (1-p_nf)(z) &= \sum_{k=0}^{n+d} \left(\sum_{i=1}^d A_{i,n} \frac{\overline{z}_i^k}{\omega_k}\right) z^k = \sum_{i=1}^d A_{i,n} \left( \sum_{k=0}^{n+d} \frac{\overline{z}_i^k z^k}{\omega_k}\right) \\
    &= \sum_{i=1}^d A_{i,n} k_{n+d}(z, z_i).
\end{align*}
Now, for $1 \leq i \leq d_1$, we have $A_{i,n} \in O\left(\frac{1}{S_{n+d}}\right)$ by Lemma \ref{main-lemma}, while by Lemma \ref{k_small}, $k_{n+d}(z, z_i) \in o(S_{n+d})$ uniformly on $z \in K$ as we are avoiding a neighbourhood of $z_i$, $1 \leq i \leq d_1$. Therefore these terms go uniformly to zero on $K$.

For the case $d_1 < i \leq d$, one can see that $k_{n+d}(z, z_i) \in O\left(\frac{\overline{z}_i^{n+d+1}}{w_{n+d}}\right)$ uniformly on $K$.
Again, by Lemma $\ref{main-lemma}$, $A_{i,n} \in O\left(\frac{1}{S_{n+d}|z_i|^{n+d+1}}\right)$. As a result, these terms also go uniformly to zero on $K$.
\end{proof}

\section{Distribution of zeros}\label{section3}

We are going to show the proof of Theorem \ref{mainth}. We will exploit classical results from approximation theory, for which we were inspired by the nice summary of techniques in \cite{Soundar}. For the rest of the article, for a polynomial $P$, $\mu_P$ will denote the measure formed as the average of  the delta measures at the zeros of $P$, and $\nu_E$ will denote the uniform distribution measure over the set $E$.
We base our approach on a classical result of \"Erdos and Tur\'an, claiming that, given a monic polynomial with small size on the unit circle and not too small of a constant coefficient, then its zeros cluster uniformly around the unit circle. There are multiples ways of quantifying these conditions for a monic polynomial. For our case, given a polynomial $P$ we will study
\begin{equation*}
    H(P) = \max_{|z| = 1} \frac{|P(z)|}{\sqrt{|P(0)|}}.
\end{equation*}
With respect to this $H$, \"Erdos-Tur\'an's result is stated as follows:

\begin{theorem}\label{root-distribution-theorem}
Let $\{P_n\}_{n \in \N}$ be a family of monic polynomials, such that $H(P_n) = e^{o(n)}$. Then   
\begin{equation*}
  \lim_{n \to \infty} \mu_{P_{n}} = \nu_{\T},
\end{equation*}
where the convergence is in the weak sense.
\end{theorem}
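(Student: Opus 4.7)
The plan is to show that every weakly convergent subsequence of $\{\mu_{P_n}\}$ converges to $\nu_\T$ and that the sequence itself is tight, from which the weak convergence of the whole sequence follows. The approach is potential-theoretic: I would first use Jensen's formula, together with $H(P_n)=e^{o(n)}$, to force the zeros near $\T$ in a logarithmic sense, and then identify the weak limit through its Fourier moments on $\T$. Concretely, Jensen's formula applied to the monic polynomial $P_n(z)=\prod_j(z-z_{j,n})$ at radius $1$ reads
\begin{equation*}
\frac{1}{2\pi}\int_0^{2\pi}\log|P_n(e^{i\theta})|\,d\theta = \log|P_n(0)| + \sum_{|z_{j,n}|<1}\log\frac{1}{|z_{j,n}|}.
\end{equation*}
Combining with $\log|P_n(e^{i\theta})| \leq \log H(P_n) + \tfrac12\log|P_n(0)|$ and the identity $\log|P_n(0)|=\sum_j\log|z_{j,n}|$, rearranging yields
\begin{equation*}
\sum_j \bigl|\log|z_{j,n}|\bigr| \leq 2\log H(P_n) = o(n),
\end{equation*}
i.e.\ $\int \bigl|\log|w|\bigr|\,d\mu_{P_n}(w)\to 0$. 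Chebyshev's inequality then gives tightness and forces every weak sub-sequential limit $\mu$ to be supported on $\T$.

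Next I would obtain a one-sided asymptotic bound for the logarithmic potential outside the disk. Since $P_n(z)/z^n$ is analytic on $\C\setminus\{0\}$ and tends to $1$ at infinity, the maximum modulus principle on $\{|z|\geq 1\}\cup\{\infty\}$ yields $|P_n(z)|\leq |z|^n H(P_n)\sqrt{|P_n(0)|}$ for $|z|\geq 1$. Dividing by $n$ and using the previous bound on $\frac{1}{n}\log|P_n(0)|=\int\log|w|\,d\mu_{P_n}$, one deduces
\begin{equation*}
\limsup_{n\to\infty}\frac{1}{n}\log|P_n(z)| \leq \log|z|\quad\text{for every }|z|>1.
\end{equation*}
Along a weakly convergent subsequence $\mu_{P_{n_k}}\rightharpoonup\mu$, for fixed $|z|>1$ the function $w\mapsto\log|z-w|$ is continuous on bounded sets, and its tail over $|w|>R$ is dominated by $\int\bigl|\log|w|\bigr|\,d\mu_{P_n}=o(1)$ (up to an error of order $\log R\cdot\mu_{P_n}\{|w|>R\}$). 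Passing to the limit,
\begin{equation*}
\int_\T \log|z-w|\,d\mu(w) \leq \log|z|\quad\text{for all }|z|>1.
\end{equation*}

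Finally, since $\mu$ lives on $\T$, I would write $\log|z-w|=\log|z|+\log|1-w/z|$ and expand $\log|1-w/z|=-\mathrm{Re}\sum_{k\geq 1}(w/z)^k/k$, uniformly convergent for $|w|=1$ and fixed $|z|>1$. The inequality above becomes
\begin{equation*}
\mathrm{Re}\sum_{k=1}^{\infty}\frac{c_k}{k z^k}\geq 0\quad\text{for every }|z|>1,\qquad c_k := \int_\T w^k\,d\mu.
\end{equation*}
Setting $z=Re^{i\theta}$ and letting $R\to\infty$, the leading surviving Laurent coefficient must be nonnegative for every $\theta$, forcing $c_1=0$; inducting on $k$ by peeling off successive terms yields $c_k=0$ for all $k\geq 1$, and conjugation gives $c_{-k}=0$ as well, so every nontrivial Fourier coefficient of $\mu$ vanishes and $\mu=\nu_\T$. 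The main obstacle I anticipate is the passage to the weak limit in the second paragraph: the zeros of $P_n$ are only tight, not uniformly bounded, so classical weak-convergence arguments for bounded continuous test functions do not directly handle the unbounded integrand $\log|z-w|$. The quantitative radial concentration $\int\bigl|\log|w|\bigr|\,d\mu_{P_n}=o(1)$ from the Jensen step is precisely what tames the tails and justifies the limit exchange.
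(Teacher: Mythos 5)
Your first two steps are sound: Jensen's formula does yield $\sum_j\bigl|\log|z_{j,n}|\bigr|\le 2\log H(P_n)=o(n)$, and the maximum principle applied to $P_n(z)/z^n$ gives $\tfrac1n\log|P_n(z)|\le\log|z|+o(1)$ for $|z|\ge1$ (for comparison, the paper itself offers no proof of this theorem and simply quotes \cite{Soundar}, so a potential-theoretic route like yours is a legitimate alternative in principle). The genuine gap is in the passage to the weak limit, and it is not where you place it. The tail over $|w|>R$ is indeed tamed by $\int\bigl|\log|w|\bigr|\,d\mu_{P_n}=o(1)$; what is not tamed is the logarithmic singularity of $w\mapsto\log|z-w|$ at $w=z$. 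For a fixed $z$ with $|z|>1$ the identity $\tfrac1n\log|P_n(z)|=\int\log|z-w|\,d\mu_{P_n}(w)$ may carry an enormous negative contribution from a single zero lying super-exponentially close to $z$ (a zero at distance $e^{-n^2}$ contributes about $-n$ to the normalized sum), and nothing in the hypothesis $H(P_n)=e^{o(n)}$ excludes this. Since the limit measure $\mu$ charges no neighbourhood of $z$, that negative mass vanishes in the limit, so the bound $\int\log|z-w|\,d\mu_{P_{n_k}}(w)\le\log|z|+o(1)$ need not transfer to $\int_{\T}\log|z-w|\,d\mu(w)\le\log|z|$: for this integrand weak convergence only gives upper semicontinuity, $\limsup_k\int\log|z-w|\,d\mu_{P_{n_k}}\le\int\log|z-w|\,d\mu$, which is the wrong direction for your purpose. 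As written, the step ``passing to the limit'' can fail for an individual $z$.

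The inequality you want is nevertheless true, but repairing it requires an idea that is absent from the proposal. One standard fix: for a fixed small $\delta$, integrate the offending quantity $\int_{|w-z|<\delta}\log\frac1{|z-w|}\,d\mu_{P_{n_k}}(w)$ with respect to area in $z$ over an annulus at positive distance from $\overline{\D}$; by Fubini and the fact that only an $o(1)$ proportion of the zeros have modulus $\ge1+\epsilon$, this area integral is $o(1)$, so by Fatou the bad term tends to $0$ along a further subsequence for almost every $z$. This gives $\int_{\T}\log|z-w|\,d\mu(w)\le\log|z|$ for a.e.\ $z$, and then for all $|z|>1$ because, once $\mu$ is known to be supported on $\T$, the left-hand side is harmonic (hence continuous) off $\T$. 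Alternatively, averaging over circles $|z|=R$ and using the mean-value property bypasses the pointwise issue altogether. With such a repair, your concluding Fourier-coefficient argument does close the proof; but as submitted, the justification of the key inequality addresses only the harmless tail at infinity and misses the real obstruction at the singularity.
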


See \cite{Soundar}. It will turn out that we have plenty of room to establish the applicability of this Theorem to our context, but we prefer to get the best estimates we may obtain, in order to promote future further research.
To apply the result to our case, we just need to re-normalize our polynomials into monic polynomials. We focus on $P_n = 1 - p_nf$, which is a polynomial of degree, at most, $n + d$, and study the polynomial $\frac{P_n}{d_{n + d, n}}$ which is, obviously, monic and has the same roots as $P_n$. We have that
\begin{equation*}
    H\left(\frac{P_n}{d_{n+d,n}}\right) = \max_{|z| = 1} \frac{|P_n(z)|}{\sqrt{|d_{0,n}d_{n+d,n}|}} \leq \frac{C}{\sqrt{|d_{0,n}d_{n+d,n}|}}
\end{equation*}
where we used Theorem \ref{theorem-1} to bound the value of $P_n$ on the unit disk. Therefore, in order to bound the values for $H$, we need to find lower bounds for the values of $d_{0,n}$ and $d_{n+d,n}$. The first one was already done in \cite{FMS1}, Theorem 4.1 showing that \[\|1-p_n f\|^2 \approx \frac{1}{S_{n+d}},\] where $A \approx B$ means that there exists a constant $C$ such that $ C^{-1} B \leq A \leq CB$. This is enough for determining $d_{0,n}$ up to a constant since $p_nf$ is the orthogonal projection of $1$ onto $\mathcal{P}_n f$ and thus \[\|1-p_nf\|^2 = \left\langle 1-p_nf, 1-p_nf \right\rangle =   \left\langle 1-p_nf, 1 \right\rangle = d_{0,n}.\]

Only $d_{n+d,n}$ remains to be correctly estimated. Let us see how this works with an example that will illustrate both the difficulties and the solution for this problem. After this we will provide a general proof.

\begin{example} \label{example-degree-two}
Let $f$ be a degree two monic polynomial with two distinct roots on the unit circle. Without loss of generality, and under a rotation if necessary, we may assume that $z_1 = e^{i\theta} = \overline{z}_2$ are these two roots and $0 < \theta \leq \pi/2$.

Under these conditions, the matrix $E$ as defined in Lemma \ref{projections} takes the form
\begin{equation*}
    E = \begin{pmatrix}
    S_{n+2} & k_{n+2}(e^{i\theta}, e^{-i\theta}) \\ k_{n+2}(e^{-i\theta}, e^{i\theta}) & S_{n+2} 
    \end{pmatrix}.
\end{equation*}
Therefore, it follows that
\begin{equation*}
    E^{-1} = \frac{1}{\det{(E)}}\begin{pmatrix}
    S_{n+2} & -k_{n+2}(e^{i\theta}, e^{-i\theta}) \\ -k_{n+2}(e^{-i\theta}, e^{i\theta}) & S_{n+2} 
    \end{pmatrix}.
\end{equation*}
So that
\begin{equation*}
    A_{1,n} = \frac{1}{\det{(E)}} \left(S_{n+2} -k_{n+2}(e^{i\theta}, e^{-i\theta}) \right)
\end{equation*}
\begin{equation*}
    A_{2,n} = \frac{1}{\det{(E)}} \left(S_{n+2} -k_{n+2}(e^{-i\theta}, e^{i\theta}) \right).
\end{equation*}
By Lemma \ref{projections}, we have that $d_{n+2, n}$ is equal to
\begin{equation*}
\frac{2}{\omega_{n+2} \det{(E)}} \left[S_{n+2}\cos{(\theta(n+2))} - \Re{\left(k_{n+2}(e^{i\theta}, e^{-i\theta})e^{-i\theta(n+2)} \right)}\right].
\end{equation*}
Now, for any $0 < \theta \leq \pi/2$, there exists an increasing sequence of natural numbers $\{n_k\}_{k=0}^\infty$ and a positive constant $\delta$ with $|\cos{(\theta(n_k + 2))}| > \delta$ for all $n_k$. For this sequence it follows that
\begin{equation*}
    |d_{n_k + 2, n_k}| \geq \frac{2 \delta S_{n_k + 2}(1 + o(1))}{\omega_{n_k+2} S_{n_k+2}^2(1 + o(1)) } \geq \frac{C}{\omega_{n_k+2} S_{n_k+2}}
\end{equation*}
for some constant $C > 0$, thus finding a lower bound for a subsequence of $d_{n+2, n}$.
Therefore it follows that,
\begin{equation*}
    H\left(\frac{P_{n_k}}{d_{n_k+2, n_k}}\right) = O\left(S_{n_k + 2} \sqrt{\omega_{n_k + 2}}\right) \implies  H\left(\frac{P_{n_k}}{d_{n_k+2, n_k}}\right) = e^{o(n_k)},
\end{equation*}
so we can apply Theorem \ref{root-distribution-theorem} to this subsequence. 

Note that, unless we delve into more details, we can't avoid working with subsequences as we can't control $|\cos(\theta(n+2))|$ under general conditions. To make the situation manageable, we will need Lemma \ref{sum-cosines-lemma} below.
\end{example}

Before trying to compute a lower bound for $d_{n+d,n}$ that is valid in general, we will need some technical work. We just state here some technical lemmas and leave their proofs for the next section.

\begin{lemma}\label{d_n-lemma}
Let $f$ be a monic polynomial of degree $d$ with simple zeros $Z(f) \cap \D = \emptyset \neq Z(f) \cap \T = \{z_i\}_{i=1}^{d_1}$ and $\{z_i\}_{i=d_1+1}^{d}= Z(f) \backslash \overline{\D}$. Let $p_n$ be the $n$-th o.p.a. to $1/f$. Define $v := (v_m)_{m=1}^{d-d_1}$, $v_m = 1/\overline{z}_{m+d_1}$, $B := (b_{l,m})_{l,m=1}^{d-d_1}$, $b_{l,m} = \frac{1}{\overline{z}_{m+d_1} z_{l+d_1}-1}$ and $s := (s_l)_{l=1}^{d-d_1}$, $s_l = \sum_{j=1}^{d_1} \frac{\overline{z}_j^{n+d+1}}{\overline{z}_j z_{l+d_1} - 1}$. Then there exists a positive constant $C$ such that
\begin{align*}
    d_{n+d, n} = \widehat{1 - p_{n}f}(n+d) = \frac{C}{w_{n+d}S_{n+d}}\left(G_n + o(1)\right)
\end{align*}
where
\begin{align*}
    G_{n} = \begin{vmatrix}\sum_{j=1}^{d_1} \overline{z}_j^{n+d}& v\\s^t&B\end{vmatrix}.
\end{align*}
\end{lemma}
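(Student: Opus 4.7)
The plan is to express $d_{n+d,n}$ as a ratio of two determinants, extract the leading order of the numerator via a rescaling plus a Schur complement calculation, and recognise the resulting limit as $G_n$.

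From Lemma~\ref{projections} we have $d_{n+d,n} = \omega_{n+d}^{-1}\sum_i A_{i,n}\overline{z}_i^{n+d}$ with $A_n^t = E^{-1}v_0^t$. Setting $u = (\overline{z}_1^{n+d},\ldots,\overline{z}_d^{n+d})$, the sum is the bilinear form $uE^{-1}v_0^t$, and the standard block-determinant identity
\[
\det\begin{pmatrix}E & v_0^t \\ u & 0\end{pmatrix} = -\det(E)\,uE^{-1}v_0^t
\]
yields $d_{n+d,n} = -\det(\hat{E})/(\omega_{n+d}\det(E))$, where $\hat{E}$ is the $(d+1)\times(d+1)$ matrix obtained by attaching $u$ as a bottom row, $v_0^t$ as a right column, and $0$ in the corner. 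Since Lemma~\ref{lemma-3.2} already controls $\det(E)$, only the leading asymptotic of $\det(\hat{E})$ remains.

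To neutralise the exponential growth coming from the zeros outside $\overline{\D}$, I would multiply each row $l>d_1$ of $\hat{E}$ by $\omega_{n+d}/z_l^{n+d+1}$ and each column $m>d_1$ by $1/\overline{z}_m^{n+d+1}$. By Lemma~\ref{lemma_k}, this extracts the factor $\prod_{l>d_1}|z_l|^{2(n+d+1)}/\omega_{n+d}^{d-d_1}$ and leaves a matrix $\hat{E}'$ with bounded, asymptotically identifiable entries: the top-left $d_1\times d_1$ block is still $E_{11}\sim S_{n+d}(I+o(1))$ (Lemma~\ref{k_small}), the exterior-exterior block tends entrywise to $B$, the mixed blocks are either $O(1/\omega_{n+d})$ or of modulus $O(1)$ with convergent coefficients $C_{l,m}(n)\to(\overline{z}_m z_l-1)^{-1}$, the new bottom row is $(\overline{z}_1^{n+d},\ldots,\overline{z}_{d_1}^{n+d},1/\overline{z}_{d_1+1},\ldots,1/\overline{z}_d,0)$, and the new right column has entries $1$ in the boundary rows and $\omega_{n+d}/z_l^{n+d+1}$ (exponentially small) in the exterior ones. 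Applying the Schur complement to $\hat{E}'$ with respect to $E_{11}$, and using the unitarity $\overline{z}_j z_j = 1$ for $j\le d_1$ to force the two mixed bilinear corrections to vanish in the limit, the $(d-d_1+1)\times(d-d_1+1)$ Schur complement becomes
\[
\operatorname{SC}\sim\begin{pmatrix} B+o(1) & -S_{n+d}^{-1}(s^t+o(1)) \\ v+o(1) & -S_{n+d}^{-1}(\sum_{j=1}^{d_1}\overline{z}_j^{n+d}+o(1)) \end{pmatrix}.
\]
Factoring $-S_{n+d}^{-1}$ out of the last column and performing the sign-preserving cyclic permutation of rows and columns that converts $\begin{pmatrix}B & s^t \\ v & \sum_j\overline{z}_j^{n+d}\end{pmatrix}$ into the matrix defining $G_n$, we obtain $\det(\operatorname{SC})=-S_{n+d}^{-1}(G_n+o(1))$.

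Putting everything together, $\det(\hat{E})\sim -\omega_{n+d}^{-(d-d_1)}\prod_{l>d_1}|z_l|^{2(n+d+1)}\,S_{n+d}^{d_1-1}(G_n+o(1))$, and dividing by $\omega_{n+d}\det(E)$ and invoking Lemma~\ref{lemma-3.2} (with $\det(E)\sim\det(B)\,S_{n+d}^{d_1}\omega_{n+d}^{-(d-d_1)}\prod_{l>d_1}|z_l|^{2(n+d+1)}$), all powers of $S_{n+d}$, $\omega_{n+d}$ and $|z_l|^{2(n+d+1)}$ cancel cleanly to give $d_{n+d,n}=(\det B)^{-1}(\omega_{n+d}S_{n+d})^{-1}(G_n+o(1))$, so $C=1/\det(B)>0$. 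The main obstacle is the careful bookkeeping inside the Schur complement step: several naively $O(1)$ summands have to collapse either into the compact $G_n$-shaped determinant or into the $o(1)$ bucket, which requires combining the explicit error estimate from Lemma~\ref{lemma_k} with the unit-modulus identity $\overline{z}_j z_j = 1$ on $\T$.
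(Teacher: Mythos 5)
Your proposal is correct, but it takes a genuinely different route from the paper's. The paper first proves the auxiliary Lemma \ref{E_inverse}, identifying via permutation expansions the leading asymptotics of every cofactor of $E$, and then assembles $d_{n+d,n}=\omega_{n+d}^{-1}\sum_{i,j}R_{i,j}\overline{z}_i^{n+d}$ term by term, splitting the sum into three blocks and recognizing at the end the cofactor expansion of $G_n$. You instead write $\sum_i A_{i,n}\overline{z}_i^{n+d}$ as a bordered determinant divided by $\det(E)$, rescale the exterior rows and columns to extract $\prod_{l>d_1}|z_l|^{2(n+d+1)}/\omega_{n+d}^{d-d_1}$, and take a Schur complement with respect to the boundary block $E_{11}\sim S_{n+d}(I+o(1))$; the matrix defining $G_n$ then appears in one piece, up to a sign-preserving cyclic permutation of rows and columns, and the constant is identified as $C=1/\det(B)>0$, consistent with Lemma \ref{lemma-3.2} and the positive definiteness of $B$. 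What your route buys is a shorter and structurally more transparent argument: it bypasses Lemma \ref{E_inverse} entirely and explains why precisely the bordered determinant $G_n$ shows up, whereas the paper's cofactor computation produces finer entrywise information about $E^{-1}$ that is not otherwise needed. Three small points would deserve a line each in a polished write-up: the step $E_{11}^{-1}=S_{n+d}^{-1}(I+o(1))$ should be attributed to Lemma \ref{k_small}; the claim that the corrections $CE_{11}^{-1}B'$ in the exterior columns are $o(1)$ rests on $S_{n+d}\,\omega_{n+d}\to\infty$, the same fact the paper invokes at the end of the proof of Lemma \ref{main-lemma} (this matters when the weights decrease, e.g.\ in the Bergman case, where $1/\omega_{n+d}$ is unbounded); and your phrase about the mixed bilinear corrections ``vanishing by unitarity'' is loosely worded, since the corrections coming from the all-ones column do not vanish but are exactly what generate the $s^t$ column and the $\sum_{j}\overline{z}_j^{n+d}$ corner --- your displayed Schur complement already reflects this correctly, so only the prose needs tightening.
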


The next lemma involves the value of the determinant $G_n$ in the special case where we have only one zero in the unit disk, which we will take to be $z_1 = 1$. 

\begin{lemma}\label{g_one_zero}
Let $z_2, \dots, z_d \in \mathbb{C}\backslash \overline{\mathbb{D}}$, be different with $d \geq 2$, then
\begin{align*}
    G = \begin{vmatrix}1& v\\s^t&B\end{vmatrix} \not = 0,
\end{align*}
where $v := (v_m)_{m=2}^{d}$, $v_m = 1/\overline{z}_{m}$, $B := (b_{l,m})_{l,m=2}^{d}$, $b_{l,m} = \frac{1}{\overline{z}_{m} z_{l}-1}$ and $s := (s_l)_{l=2}^{d}$, $s_l = \frac{1}{z_{l} - 1}$.
\end{lemma}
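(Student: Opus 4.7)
My plan is to split $G$ via a Schur complement into the product $\det(B)(1 - vB^{-1}s^t)$ and then control each factor by recognizing an underlying rational-function identity.

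First, I would check that $\det(B) \neq 0$. Setting $w_l := 1/z_l$, so that $|w_l| < 1$, a direct factorization writes $B = D K D^*$ with $D = \operatorname{diag}(w_l)$ and $K_{lm} = 1/(1 - \overline{w}_m w_l)$, the Szeg\"o kernel matrix at distinct points of $\D$. The latter is strictly positive definite, so $B$ is positive definite and $\det(B) > 0$; alternatively this is the statement already invoked in Lemma \ref{lemma-3.2}. The Schur complement formula then gives $G = \det(B)(1 - vB^{-1}s^t)$, and it suffices to show the scalar factor is nonzero.

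Next, I would interpret that scalar factor via interpolation. Let $u := B^{-1}s^t$ and consider the rational function
\[
R(z) := \sum_{m=2}^d \frac{u_m}{\overline{z}_m z - 1}.
\]
The equations $Bu = s^t$ say precisely that $R(z_l) = 1/(z_l - 1)$ for $l = 2, \dots, d$, so the auxiliary function $h(z) := R(z) - 1/(z-1)$ vanishes at the $d-1$ distinct points $z_2, \dots, z_d$, has simple poles contained in $\{1, 1/\overline{z}_2, \dots, 1/\overline{z}_d\}$ (all distinct under the hypothesis $|z_m|>1$), and decays like $O(1/z)$ at infinity. A count of zeros, poles, and degree at infinity forces
\[
h(z) = C \cdot \frac{\prod_{l=2}^d (z-z_l)}{(z-1)\prod_{m=2}^d (z-1/\overline{z}_m)},
\]
for a single scalar $C$, and matching the coefficient of $1/z$ in both expressions at infinity yields $C = vB^{-1}s^t - 1$.

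Finally, I would rule out $C=0$. If $C$ were zero then $h \equiv 0$ and $R(z) \equiv 1/(z-1)$. But the only possible poles of $R$ lie in $\D$, at the points $1/\overline{z}_m$, all different from $1$; for such an identity to hold the residues of $R$ at its own poles must vanish, forcing $u_m = 0$ for every $m$, which then contradicts $R \equiv 1/(z-1) \not\equiv 0$. Hence $C \neq 0$ and $G = -C \det(B) \neq 0$. I expect the main subtlety to lie in cleanly pinning down the degree and pole structure of $h$ so as to read off $C$ as the residue at infinity; everything else is a short matching or an immediate appeal to the hypotheses.
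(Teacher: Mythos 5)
Your proof is correct, but it takes a genuinely different route from the paper's. The paper substitutes $a_j = 1/\overline{z}_j$, strips the factors $\overline{a}_l$ and $a_m$ off the rows and columns so that $G$ becomes $\prod_j |a_j|^2$ times a bordered Szeg\"o Gram determinant, and then reads the remaining scalar off an $H^2$ projection: projecting the constant $1$ onto $\spn\{k_{a_j}\}_{j=2}^d$ and using the standard Blaschke-product identity yields the explicit value $\prod_j(-\overline{a}_j)\frac{1-a_j}{1-\overline{a}_j}$, which is nonzero because its modulus is $\prod_j |a_j| \in (0,1)$. You instead stay in the original variables and argue by linear algebra plus rational interpolation: the Schur complement $G = \det(B)\,(1 - vB^{-1}s^t)$, the factorization $B = DKD^*$ (equivalently the positive definiteness of $B$ already invoked in Lemma \ref{lemma-3.2}) to get $\det(B) > 0$ and justify $B^{-1}$, and then the observation that $Bu = s^t$ forces $h = R - \frac{1}{z-1}$ to have the form $C\,\frac{\prod_{l=2}^d(z-z_l)}{(z-1)\prod_{m=2}^d(z-1/\overline{z}_m)}$ with $C = vB^{-1}s^t - 1$, while $C = 0$ is impossible since then $R \equiv \frac{1}{z-1}$ would force all residues $u_m$ to vanish (equivalently, and even more directly, $h$ always has a genuine pole of residue $-1$ at $z=1$). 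The steps all check: $h = O(1/z)$ at infinity, the $d-1$ zeros $z_l$ avoid every candidate pole since $|z_l|>1$, and the sign bookkeeping $G = -C\det(B)$ is right. As for what each buys: the paper's projection argument produces a closed-form value for the scalar factor (hence a quantitative lower bound on $|G|$ in terms of the $|z_j|$) and runs on the RKHS machinery already present in the article, while yours is more elementary and self-contained; note, too, that computing the residue of your $h$ at $z=1$ gives $C = -\prod_m(1-1/\overline{z}_m)/\prod_l(1-z_l)$, which under the substitution $a_j = 1/\overline{z}_j$ is exactly the paper's product, so the two arguments land on the same explicit constant.
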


The third of the auxiliary lemmas, inspired by Example \ref{example-degree-two}, deals with the need to control the size of trigonometric functions. Particularly, it is obvious when the quotients between angles are rational:

\begin{lemma}\label{sum-cosines-lemma}
Let $\theta_1, \dots, \theta_n \in [-\pi, \pi)$. Then, there exist $\{n_k\}_{k=0}^\infty \subset \mathbb{N}$ such that
\begin{equation*}
    \lim_{k \to \infty} n_k \theta_k \equiv 0 \hspace{3pt} \mod{2\pi}.
\end{equation*}
\end{lemma}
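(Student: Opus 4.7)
The plan is to prove the statement as an elementary instance of simultaneous Diophantine approximation, obtained by a pigeonhole argument on the torus $\T^n$. As written, the indices of $\theta$ and $n_k$ clash; the intended reading (which is what is actually used in the proof of Theorem \ref{mainth}, in order to simultaneously control multiple cosine factors appearing in $G_n$) is that there exists a \emph{single} sequence $\{n_k\}_{k\geq 0} \subset \N$ such that for \emph{every} fixed $j \in \{1,\dots,n\}$,
\[
 n_k \theta_j \longrightarrow 0 \pmod{2\pi}, \qquad k \to \infty.
\]

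First, I would fix an arbitrary $\varepsilon > 0$ and consider the orbit $\{(m\theta_1,\dots,m\theta_n) \bmod 2\pi : m \in \N\} \subset \T^n$. Partitioning $\T^n$ into finitely many boxes of diameter at most $\varepsilon$ and applying the pigeonhole principle to sufficiently many iterates of the orbit, one finds integers $0 \leq m_1 < m_2$ whose orbit points lie in a common box. Setting $N_\varepsilon := m_2 - m_1 > 0$, the triangle inequality on the torus yields
\[
 \bigl| N_\varepsilon \theta_j \bmod 2\pi \bigr| \leq \varepsilon \quad \text{for every } j \in \{1,\dots,n\}.
\]

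Second, I would iterate this construction with $\varepsilon = 1/k$ to obtain $n_k := N_{1/k}$. Then $|n_k \theta_j \bmod 2\pi| \leq 1/k \to 0$ uniformly in $j$, so the desired simultaneous limit holds; if monotonicity of $\{n_k\}$ is required, one can pass to a further subsequence by restricting the pigeonhole search at the $k$-th stage to integers $m > n_{k-1}$. An alternative route, which I would mention only if more machinery were convenient, is to note that the closure in $\T^n$ of the cyclic subgroup generated by $(\theta_1,\dots,\theta_n)$ is a closed subgroup of $\T^n$, so it contains the identity as an accumulation point of positive integer multiples of the generator; this is essentially Kronecker's theorem and gives the same conclusion.

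No step here presents a genuine obstacle: the result is classical and the only small care needed is (i) ensuring $N_\varepsilon$ is strictly positive, which follows automatically from $m_2 > m_1$, and (ii) keeping track of the fact that the pigeonhole estimate is uniform in $j$, which is what makes the limit truly simultaneous. The real work of the paper lies in feeding this lemma into the subsequent estimate of $d_{n+d,n}$, not in the proof of the lemma itself.
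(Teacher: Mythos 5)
Your argument is correct, and you rightly read the statement in its intended simultaneous form (one sequence $\{n_k\}$ working for every $j$ at once), which is exactly what the proof of Theorem \ref{mainth} needs. Your route differs from the paper's in its mechanics: you run a single pigeonhole argument directly on the torus $\T^n$, partitioning it into finitely many boxes of diameter at most $\varepsilon$ and taking the difference of two orbit points lying in a common box, which handles all the angles in one stroke; the paper instead proceeds one angle at a time, first choosing $k_1$ with $|k_1\theta_1|$ of size about $T^{-3}$ modulo $2\pi$, then applying a one-dimensional pigeonhole to the multiples $\{k_1 s\theta_2\}_{s=1}^{2T^2}$ and setting $K=k_1k_2$, with the general case left to induction. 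Yours is the standard Dirichlet simultaneous-approximation argument (the paper itself remarks that the lemma is a special case of Kronecker's theorem); it avoids the induction and the cascading of accuracies ($T^{-3}$ versus $T^{-1}$) that a fully written inductive step would require, at essentially no extra cost. One small caveat: your fix for monotonicity, restricting the pigeonhole search at stage $k$ to integers $m>n_{k-1}$, does not quite work, since the difference $m_2-m_1$ of two such integers need not exceed $n_{k-1}$. The repair is easy: some box contains infinitely many orbit points $m_1<m_2<\cdots$, and the differences $m_j-m_1$ are all admissible and tend to infinity. It is worth making this explicit, because the application in Theorem \ref{mainth} does need $n_k\to\infty$ for the $o(1)$ terms coming from Lemma \ref{d_n-lemma} to vanish; the paper's own proof is silent on this point as well.
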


Our last lemma, for now, is about linear combinations of powers of unimodular complex numbers.

\begin{lemma}\label{sum_exponentials_lemma}
    Let $\theta_1, \dots, \theta_n \in [0, 2\pi)$ be different, $C_1, \dots, C_n$ be any arbitrary complex numbers and $N \in \mathbb{N}$.
    Then
    \begin{equation*}
        \sum_{k=1}^n C_k e^{im\theta_k} = 0, \forall m \geq N, m \in \mathbb{N}
    \end{equation*}
    if and only if $C_1 = C_2 = \dots = C_n = 0$.
\end{lemma}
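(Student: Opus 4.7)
The ``if'' direction is obvious, so all of the content lies in showing that if $\sum_{k=1}^n C_k e^{im\theta_k} = 0$ for every integer $m \geq N$, then all of the $C_k$ must vanish. My plan is to reduce the problem to the invertibility of a Vandermonde matrix.

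The key move is to extract from the hypothesis any $n$ consecutive identities, say those for $m = N, N+1, \ldots, N+n-1$. Writing $x_k := e^{i\theta_k}$ and absorbing the common factor $x_k^N$ into a new coefficient $D_k := C_k x_k^N$, the hypothesis produces the linear system
\begin{equation*}
    \sum_{k=1}^n D_k x_k^{j} = 0, \qquad j = 0, 1, \ldots, n-1.
\end{equation*}
The coefficient matrix here is exactly the Vandermonde matrix $(x_k^j)_{j,k}$. Since the $\theta_k$ are pairwise distinct in $[0, 2\pi)$, the nodes $x_1, \ldots, x_n$ are distinct points on $\T$, and hence the Vandermonde determinant $\prod_{j < k}(x_k - x_j)$ is nonzero. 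Therefore the only solution of the system is the trivial one, $D_k = 0$ for every $k$, and since $|x_k^N| = 1$ this forces $C_1 = \cdots = C_n = 0$.

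There is no substantive obstacle here: the statement is essentially the linear independence of distinct exponential characters restricted to $\N$, and the Vandermonde observation dispatches it in one step. If one wished to avoid invoking Vandermonde explicitly, an alternative plan would be induction on $n$: given the vanishing for all $m \geq N$, the combination $\sum_k C_k e^{i(m+1)\theta_k} - e^{i\theta_1}\sum_k C_k e^{im\theta_k}$ eliminates the term $k=1$ and yields an identity of the same shape with $n-1$ summands and coefficients $C_k(e^{i\theta_k} - e^{i\theta_1})$, which are nonzero for $k \geq 2$ by distinctness; this reduces the problem to the inductive hypothesis. Either approach works, but the Vandermonde route is cleaner and gives the conclusion immediately from $n$ equations.
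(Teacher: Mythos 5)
Your proof is correct, but it takes a genuinely different route from the paper. You reduce the statement to a finite linear-algebra fact: extracting only the $n$ consecutive identities $m=N,\dots,N+n-1$, factoring out $x_k^N$ with $x_k=e^{i\theta_k}$, and invoking the nonvanishing of the Vandermonde determinant $\prod_{j<k}(x_k-x_j)$ for the distinct nodes $x_k\in\T$; your inductive variant (eliminating one exponential at a time via $\sum_k C_k e^{i(m+1)\theta_k}-e^{i\theta_1}\sum_k C_k e^{im\theta_k}$) is an equally valid algebraic alternative. The paper instead argues analytically by contradiction: assuming $C_1\neq 0$ it normalizes to $\sum_{k=2}^n(-C_k/C_1)e^{im(\theta_k-\theta_1)}=1$ for all $m\geq N$, sums this identity over $m=N,\dots,N+R$, and bounds the resulting geometric sums by $2/|1-e^{i(\theta_k-\theta_1)}|$, so the left side grows like $R$ while the right side stays bounded. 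Your approach is more economical and slightly stronger in what it uses: it needs only $n$ consecutive values of $m$ rather than the full infinite family, and it yields all $C_k=0$ in one step from the invertibility of a single matrix, whereas the paper's averaging argument exploits the whole range $m\geq N$ and the cancellation in geometric series, in the estimate-driven spirit of the rest of the paper. Either argument fully establishes the lemma.
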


Using these lemmas, we can finally prove Theorem \ref{mainth}.

\begin{proof}[Proof of Theorem \ref{mainth}.]
It suffices to show that there exists an increasing sequence of naturals numbers $\{n_k\}_{k=0}^\infty$ such that $|d_{n_k + d, n_k}| \geq \frac{\delta}{w_{n+d}S_{n+d}}$ for some $\delta > 0$. Under that condition
\begin{equation*}
    H\left(\frac{P_{n_k}}{d_{n_k+d, n_k}}\right) = O\left(S_{n_k + d}\sqrt{\omega_{n_k + d}}\right) \implies  H\left(\frac{P_{n_k}}{d_{n_k+2, n_k}}\right) = e^{o(n_k)},
\end{equation*}
and the Theorem follows from Theorem \ref{root-distribution-theorem}.
Now, using Lemma \ref{d_n-lemma} it is enough to show that there exists a subsequence $\{n_k\}_{k=0}^\infty$ and $\delta > 0$ such that $|G_{n_k}| > \delta$. 

For that, note that the previous determinant can be simply rewritten as
\begin{equation*}
    G_n = \sum_{j=1}^{d_1}\overline{z}_j^{n+d+1}\begin{vmatrix} 1/\overline{z}_j & v\\s^t_j &B\end{vmatrix}, 
\end{equation*}
where $(s_{j,l})_{l=1}^{d-d_1} = \frac{1}{\overline{z}_j z_{l+d_1} - 1}$.
We distinguish between two cases: when we have zeros outside the unit circle and when we don't have such zeros.

In the case that there are no zeros outside the unit circle, we simply get that $G_n = \sum_{j=1}^{d_1} \overline{z}_j^{n+d} = \sum_{j=1}^{d_1} e^{-i\theta_j(n+d)}$. Applying Lemma \ref{sum-cosines-lemma}, it follows that we can find $\{n_k\}_{k=0}^\infty$ such that the sum can approximate  $\sum_{j=1}^{d_1} e^{-i0} = d_1$.

When we have zeros outside the unit circle, rotating the plane if necessary, we can assume that $z_1 = 1$, so that we can rewrite
\begin{equation*}
    G_n = G + \sum_{j=2}^{d_1}\overline{z}_j^{n+d+1}\begin{vmatrix} 1/\overline{z}_j & v\\s^t_j &B\end{vmatrix},
\end{equation*}
where $G$ is as described in Lemma \ref{g_one_zero}. Therefore, we can apply Lemma \ref{sum_exponentials_lemma} to conclude that there exists $N$ such that $G_N \not = 0$ and use Lemma \ref{sum-cosines-lemma} to find a sequence $\{n_k\}_{k=0}^\infty$ such that $G_{n_{k}} \to G_N \not = 0$, proving the theorem.
\end{proof}

\section{Proof of technical lemmas}\label{section4}

Now we will concentrate on the task of establishing Lemma \ref{d_n-lemma}, for which we will use all the notation in the statement as well as the notation from Lemma  \ref{projections}. This will take the form of one more auxiliary result. 

We also denote by $B_{(j)}^{(i)}$ the matrix created by removing the i-th column of the B matrix and adding the vector $s_j := \left(s_{j,l}\right)_{l=1}^{d-d_1}$, $s_{j,l} = \frac{\overline{z}_j^{n+d+1}}{\overline{z_j} z_{d_1 + l} - 1}$ as its first column. With this notation, we have the following:

\begin{lemma}\label{E_inverse} The inverse of $E$ is given by
\begin{equation*}
    E^{-1} = \frac{S_{n+d}^{d_1 - 1}}{\det(E)} \frac{\prod_{l=d_1 + 1}^{d} |z_l|^{2(n+d+1)}}{\omega_{n+d}^{d-d_1}} R \end{equation*}
where $R = (R_{i,j})_{i,j = 1}^d$, which for large values of $n$ satisfies the following:
\begin{itemize}
    \item For $1 \leq i \leq d_1$,
    \begin{align*}
        R_{i,i} = \det{(B)} + o(1)
    \end{align*}
    and $R_{i,j} \in o(1)$ when $i \neq j$.
    \item For $d_1 < i \leq d$ and $1 \leq j \leq d_1$,
    \begin{align*}
        R_{i,j} =  \frac{(-1)^{d_1 + i}}{\overline{z}_i^{n+d+1}}  \left[\det{\left(B^{(i)}_{(j)}\right)} + o(1).\right]
    \end{align*} 
    \item Otherwise $d_1 < j \leq d$ and $R_{i,j} \in o\left(\frac{1}{\overline{z}_i^{n+d+1}}\right).$
\end{itemize}

\end{lemma}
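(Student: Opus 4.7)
Plan: The approach is Cramer's rule. Writing $(E^{-1})_{i,j} = (-1)^{i+j} M_{j,i}/\det(E)$, where $M_{j,i}$ is the $(j,i)$-minor of $E$, and using the asymptotic
$$\det(E) \;\sim\; \omega_{n+d}^{-(d-d_1)}\, S_{n+d}^{d_1} \prod_{l=d_1+1}^{d}|z_l|^{2(n+d+1)} \det(B)$$
from Lemma~\ref{lemma-3.2}, the target factorisation of $E^{-1}$ reduces to showing that
$$R_{i,j} \;=\; \frac{(-1)^{i+j}\, \omega_{n+d}^{d-d_1}}{S_{n+d}^{d_1-1} \prod_{l=d_1+1}^d |z_l|^{2(n+d+1)}}\, M_{j,i}$$
has the asymptotic form stated in each of the three regimes.

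I will estimate $M_{j,i}$ by imitating the Laplace-expansion strategy used for $\det(E)$ in Lemma~\ref{lemma-3.2}. Let $\tilde E$ denote $E$ with row $j$ and column $i$ deleted. Expanding $\det(\tilde E) = M_{j,i}$ along the rows of $\tilde E$ whose index in $E$ is $\le d_1$ gives a signed sum $\sum_T \pm\det(\tilde E_{\mathrm{top},T})\det(\tilde E_{\mathrm{bot},T^c})$ over column subsets $T$ of the appropriate size. The dominant term corresponds to the $T$ that retains as many diagonal entries $e_{l,l}=S_{n+d}$ in the top block as possible; off-diagonal top entries are $o(S_{n+d})$ by Lemma~\ref{k_small}, and entries that mix top and bottom or lie in the bottom block are controlled exponentially by Lemma~\ref{lemma_k}. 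A direct count shows that the largest power of $S_{n+d}$ obtainable is $d_1$ when both $i,j>d_1$, is $d_1-1$ when exactly one of $i,j$ is $\le d_1$, and is $d_1-2$ when $i\ne j$ both lie in $\{1,\dots,d_1\}$.

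For the leading bottom block I factor $z_l^{n+d+1}/\omega_{n+d}$ from each bottom row and $\overline{z}_m^{n+d+1}$ from each bottom column of index $m>d_1$; Lemma~\ref{lemma_k} then yields that the reduced matrix converges to the relevant submatrix of $B$. In the central case $d_1<i$, $j\le d_1$, the top-column of index $j$ is carried into the bottom block: since $j\le d_1$ the scalar $\overline{z}_j^{n+d+1}$ is \emph{not} factored out, so the entries of that column converge to $\overline{z}_j^{n+d+1}/(\overline{z}_j z_{l+d_1}-1) = s_{j,l}$; moreover the natural ordering in $\tilde E$ places this column at the first position among the bottom-block columns, matching the definition of $B^{(i)}_{(j)}$ exactly and avoiding any extra repositioning sign. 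The Laplace sign evaluates to $(-1)^{(d_1-1)d_1/2+d_1(d_1+1)/2-j} = (-1)^{d_1+j}$, which combined with Cramer's $(-1)^{i+j}$ yields the claimed $(-1)^{d_1+i}$. In the diagonal case $i=j\le d_1$ the bottom block is the unaltered bottom block of $E$, which converges to $\omega_{n+d}^{-(d-d_1)}\prod|z_l|^{2(n+d+1)}[\det(B)+o(1)]$ and produces $R_{i,i}=\det(B)+o(1)$. In the off-diagonal case $i\le d_1$, $j\ne i$, the top block loses at least one $S_{n+d}$, giving $R_{i,j}\in O(1/S_{n+d})=o(1)$ (with additional exponential decay in the subcase $j>d_1$ from a factor $|z_j|^{-2(n+d+1)}$). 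Finally, in the case $d_1<j$, the extra $S_{n+d}$ acquired by the top is absorbed by a bottom factor of order $|z_j|^{-(n+d+1)}$; since $|z_j|>1$ while $S_{n+d}\omega_{n+d}$ has at most polynomial growth by \eqref{ratio-growth-condition}, this gives $R_{i,j}\in o(1/\overline{z}_i^{n+d+1})$.

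The main obstacle is the double bookkeeping required in the central case $d_1<i$, $j\le d_1$: one must (i) track the sign correctly as it propagates from the Laplace expansion through Cramer's rule, and (ii) keep the factor $\overline{z}_j^{n+d+1}$ attached to the correct column so that the bottom-block limit is $B^{(i)}_{(j)}$ (with first column $s_j$) rather than the renormalised variant with first column $v_j$. A secondary difficulty, present in every case, is verifying uniformly that the sum over non-leading choices of $T$ really contributes $o$ of the leading term; this relies on combining the $o(S_{n+d})$ bound of Lemma~\ref{k_small} with the precise exponential asymptotics of Lemma~\ref{lemma_k} and the polynomial growth estimate~\eqref{ratio-growth-condition}.
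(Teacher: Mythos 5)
Your proposal follows essentially the same route as the paper's proof: Cramer's rule for $E^{-1}$, entrywise asymptotics of $k_{n+d}(z_l,z_m)$ from Lemmas \ref{lemma_k} and \ref{k_small} plus the growth bound \eqref{ratio-growth-condition}, extraction of the dominant term with its power of $S_{n+d}$ (the paper groups permutations into the classes $\mathscr{A}$, $\mathscr{B}_k$ where you use a Laplace block expansion, which is the same bookkeeping), and the same sign accounting, with your $(-1)^{d_1+j}$ Laplace sign combining with Cramer's $(-1)^{i+j}$ to give the stated $(-1)^{d_1+i}$, and the $\overline{z}_j^{n+d+1}$ factor correctly kept in the column that becomes $s_j$. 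The only quibble is cosmetic: in the subcase $i \leq d_1 < j$ the top block still retains $d_1-1$ factors of $S_{n+d}$ and the smallness comes from one missing factor $|z_j|^{n+d+1}$ (not $|z_j|^{2(n+d+1)}$) due to the deleted bottom row, but this still gives the required $o(1)$, so the argument is unaffected.
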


\begin{proof}
By Cramer's rule, 
\begin{align*}
    E^{-1} = \frac{1}{\det(E)} \begin{pmatrix}E_{1,1}&E_{1,2}& \dots & E_{1,d} \\ E_{2,1} &E_{2,2}& \dots & E_{2,d} \\
    \vdots & \vdots&\ddots& \vdots \\ 
    E_{d,1} & E_{d,2} & \dots & E_{d,d}
    \end{pmatrix}^T
\end{align*}
with each $E_{i,j}$ the appropriate cofactor.
We first focus on the columns that correspond to the zeros in the unit circle, that is $E_{i,j}$, $1 \leq j \leq d_1$.
Firstly, noticing that $e_{i,j} = k_{n+d}(z_i, z_j)$ and using the definition of a cofactor as a determinant, it follows that the diagonal terms are given by
\begin{align*}
    E_{j,j} = \sum_{\substack{\sigma \in \mathscr{S}\\ \sigma(j) = j}}\left[\sgn (\sigma)\prod_{\substack{l=1 \\ l \neq j}}^{d} k_{n+d}(z_l, z_{\sigma(l)})\right].
\end{align*}

Let us decompose this sum depending on the number of indices a given permutation fixes. As in Lemma \ref{lemma-3.2}, let $\mathscr{A}$ be the set of all permutations such that $\sigma(l) = l$ for every $1 \leq l \leq d_1$, and, for each $0 \leq k < d_1-1$, let $\mathscr{B}_k$ be the set of permutations that fix exactly $k$ of the indices in the set $\{1, \dots, d_1\}\backslash\{j\}$. Then $E_{j,j}$ may be decomposed as the sum of two summands, namely

\begin{equation}\label{e_ii}
\sum_{\substack{\sigma \in \mathscr{A}\\ \sigma(j) = j}}\sgn (\sigma)\prod_{\substack{l=1 \\ l \neq j}}^{d} k_{n+d}(z_l, z_{\sigma(l)}) 
+ \sum_{k=0}^{d_1-2}\sum_{\substack{\sigma \in \mathscr{B}_k\\ \sigma(j) = j}}\sgn (\sigma)\prod_{\substack{l=1 \\ l \neq j}}^{d} k_{n+d}(z_l, z_{\sigma(l)}).
\end{equation}

For the first summand in \eqref{e_ii}, we have that if $l = \sigma(l) < d_1$ we get, by definition,
\begin{align*}
    k_{n+d}(z_l, z_l) = S_{n+d}
\end{align*}
and when $l > d_1$ or $\sigma(l) > d_1$, we get, by Lemma \ref{lemma_k}
\begin{align*}
    k_{n+d}(z_l, z_{\sigma(l)}) = C(n, \sigma, l)\frac{\overline{z}^{n+d+1}_{\sigma(l)} z^{n+d+1}_l}{w_{n+d}},
\end{align*}
with $C(n, \sigma, l)$ such that 
\begin{align*}
    C(n, \sigma, l) \to \frac{1}{\overline{z}_{\sigma(l)} z_l-1} \text{ as } n \to \infty.
\end{align*}
Therefore the first summand of \eqref{e_ii} can be expressed as
\begin{align}\label{e_ii_first_summand}
    S_{n+d}^{d_1 - 1} \frac{\prod_{l=d_1 + 1}^{d} |z_l|^{2(n+d+1)}}{w_{n+d}^{d-d_1}}\left(\sum_{\substack{\sigma \in \mathscr{A}\\ \sigma(j) = j}}\sgn (\sigma)\prod_{\substack{l=d_1 + 1}}^{d} C(n, \sigma, l) \right).
\end{align}

Now, for the second term in \eqref{e_ii}, we notice that it will consist of sums similar to \eqref{e_ii_first_summand} but with some factor of $S_{n+d}$ missing. That factor could be replaced, in some cases, by other ones of the form $k_{n+d}(z_l, z_{\sigma(l)})$ with $ l \neq \sigma(l)$, $l, \sigma(l) \leq d_1$. However, invoking Lemma \ref{k_small}, we know that this will make a small contribution in comparison with $S_{n+d}$.
We note that
\begin{align*}
    C(n, \sigma, l) = \frac{1}{\overline{z}_{\sigma(l)} z_l-1}[1 + r(n,\sigma, l)] \text{ with } r(n, \sigma, l) \to 0 \text{ as } n \to \infty.
\end{align*}
Combining all the results above we get that $ E_{j,j}$ is equal to
\begin{equation*}
S_{n+d}^{d_1 - 1} \frac{\prod_{l=d_1 + 1}^{d} |z_l|^{2(n+d+1)}}{w_{n+d}^{d-d_1}}\left(\sum_{\substack{\sigma \in \mathscr{A}\\ \sigma(j) = j}}\sgn (\sigma)\prod_{\substack{l=d_1 + 1}}^{d} \frac{1}{\overline{z}_{\sigma(l)} z_l-1} + r(n)\right),
\end{equation*}
where $r(n) \to 0$ as $n \to \infty$.
Finally, the left-most term in brackets is just $\det{(B)}$, as a permutation $\sigma \in \mathscr{A}$ is also a bijection in the set $\{d_1 + 1, \dots, d\}\}$, so we have exactly the definition of the determinant.

For the non-diagonal terms, as before, we have

\begin{align*}
        E_{i,j} = \sum_{\substack{\sigma \in \mathscr{S}\\ \sigma(i) = j}}\left[\sgn (\sigma)\prod_{\substack{l=1 \\ l \neq i}}^{d} k_{n+d}(z_l, z_{\sigma(l)})\right] .
\end{align*}

A similar argument reveals a missing factor of $S_{n+d}$ in all terms with respect to $E_{j,j}$, yielding 
\begin{align*}
    E_{i,j} \in o(E_{j,j}).
\end{align*}

Next, we focus on the columns that correspond to the zeros outside the unit  disk, that is, $E_{i,j}$, $d_1 < j \leq d$. We divide these cofactors in two cases, one for zeros on the unit circle, and one for the rest.
In the first case, we have $1 \leq i \leq d_1$, so that, using the definition of the cofactor, we get
\begin{align*}
     E_{i,j} = \sum_{\substack{\sigma \in \mathscr{S}\\ \sigma(i) = j}}\left[\sgn (\sigma)\prod_{\substack{l=1 \\ l \neq i}}^{d} k_{n+d}(z_l, z_{\sigma(l)})\right].
\end{align*}

We further divide this sum in two terms: permutations in the set $\mathscr{A}_i$ and those in $\mathscr{B}_k$. In this case permutations in $\mathscr{A}_i$ fix all indices in $\{1, \dots, d_1\} \backslash \{i\}$ and $\mathscr{B}_{k}$ is defined as before, again avoiding $i$. For the sum of terms in $\mathscr{A}_i$ we get
\begin{align*}
         S_{n+d}^{d_1-1}\sum_{\substack{\sigma \in \mathscr{A}_i\\ \sigma(i) = j}}\left[\sgn (\sigma)k_{n+d}(z_j, z_{\sigma(j)})\prod_{\substack{l=d_1+1\\ l \neq i}}^{d} k_{n+d}(z_l, z_{\sigma(l)})\right].
\end{align*}
Using again Lemma \ref{lemma_k} and a reasoning analogous to the one before we get
\begin{align*}
         S_{n+d}^{d_1-1}\frac{\prod_{l=d_1 + 1}^{d} |z_l|^{2(n+d+1)}}{w_{n+d}^{d-d_1}} \left(\frac{\overline{z}_i}{\overline{z}_j}\right)^{n+d+1} \\ \cdot \sum_{\substack{\sigma \in \mathscr{A}_i\\ \sigma(i) = j}}\left[ \frac{\sgn (\sigma)}{\overline{z}_{\sigma(j)} z_j-1} \prod_{\substack{l=d_1+1 \\ l \neq i}}^{d} \frac{1}{\overline{z}_{\sigma(l)} z_l-1} + r \right],
\end{align*}
where $r=r(n, l, \sigma)$ and we have a factor of $\overline{z}_j^{n+d+1}$ missing with respect to the previous cases as we have eliminated the j-th column in the $E$ matrix, as well as an extra factor $\overline{z}_i^{n+d+1}$, since the i-th column in the $E$ matrix was not eliminated in this case.
For the summands corresponding to $\mathscr{B}_{k}$, one can easily check with a similar procedure that they are smaller than the ones in $\mathscr{A}_i$ by a factor of $S_{n+d}$, therefore we can conclude that $E_{i,j}$ is approximately the product of the two factors, 

\begin{equation*}
 S_{n+d}^{d_1-1}\frac{\prod_{l=d_1 + 1}^{d} |z_l|^{2(n+d+1)}}{w_{n+d}^{d-d_1}}\left(\frac{(-1)^{d_1 + j}}{{\overline{z}_j}^{n+d+1}}\right)\end{equation*} 
 
 and  
 
 \begin{equation*}\sum_{\substack{\sigma \in \mathscr{A}_i\\ \sigma(i) = j}}\left[ (-1)^{d_1 + j } \sgn (\sigma)\frac{\overline{z}_i^{n+d+1}}{\overline{z}_{\sigma(j)} z_j-1} \prod_{\substack{l=d_1+1 \\ l \neq i}}^{d} \frac{1}{\overline{z}_{\sigma(l)} z_l-1} + r(n, \sigma, l)\right].
\end{equation*}

This second factor can be easily checked to be $\det{(B^{(j)}_{(i)})}$, where the factor $(-1)^{d_1 + j}$ is added because we are considering a larger permutation, which changes the sign of the permutation $\sigma$ if we regard it as a bijection on the reduced set.

Finally, for the case $d_1 < i \leq d$, doing a similar expansion of the determinant as before, it is easily seen that for each sum we are missing an exponential factor with respect to the previous terms $E_{i,j}$. Thus in this case
\begin{align*}
    E_{i,j} \in o\left(S_{n+d}^{d_1-1}\frac{\prod_{l=d_1 + 1}^{d} |z_l|^{2(n+d+1)}}{\overline{z}_i^{n+d+1}w_{n+d}^{d-d_1}}\right)
\end{align*}
and the lemma follows transposing the matrix. \end{proof}

We can finally establish Lemma \ref{d_n-lemma}.

\begin{proof}[Proof of Lemma \ref{d_n-lemma}.] By Lemma \ref{projections} we have that
\begin{equation*}
    d_{n+d, n} = \frac{1}{\omega_{n+d}} \sum_{i=1}^d A_{i,n} \overline{z}_i^{n+d},
\end{equation*}
where $A^t_n = E^{-1} \cdot v_0^t$. Now, applying Lemma \ref{E_inverse} it follows that
\begin{equation*}
    A_{i,n} = \frac{S_{n+d}^{d_1 - 1}}{\det(E)} \frac{\prod_{l=d_1 + 1}^{d} |z_l|^{2(n+d+1)}}{\omega_{n+d}^{d-d_1}} \sum_{j=1}^d R_{i,j}.
\end{equation*}
We can plug this value of $A_{i,n}$ into the previous formula, which leads us to \begin{equation*}
    d_{n+d, n} = \frac{S_{n+d}^{d_1 - 1}}{\det(E)} \frac{\prod_{l=d_1 + 1}^{d} |z_l|^{2(n+d+1)}}{\omega_{n+d}^{d - d_1+1}} \sum_{i,j=1}^d R_{i,j} \overline{z}_i^{n+d}.
\end{equation*}
We start working with the multiplicative term. Lemma \ref{lemma-3.2} actually tells us that
\begin{equation*}
    \det{(E)} = C S_{n+d}^{d_1}\frac{\prod_{l=d_1 + 1}^{d} |z_l|^{2(n+d+1)}}{\omega_{n+d}^{d-d_1}} (1 + o(1))
\end{equation*}
for some positive constant $C$. We can substitute this value into the formula for $d_{n+d,n}$ to obtain
\begin{equation*}
    d_{n+d, n} = \frac{C}{S_{n+d} \omega_{n+d}}(1 + o(1))\sum_{i,j=1}^d R_{i,j} \overline{z}_i^{n+d}.
\end{equation*}
We can divide the sum $\sum_{i,j=1}^d R_{ij} \overline{z}_i^{n+d}$ in 3 different terms, as
\begin{equation*}
\sum_{i=1}^{d_1} \sum_{j=1}^d R_{i,j} \overline{z}_i^{n+d} + \sum_{i=d_1+1}^{d} \sum_{j=1}^{d_1} R_{i,j} \overline{z}_i^{n+d} + \sum_{i=d_1+1}^{d} \sum_{j=d_1+1}^{d} R_{i,j} \overline{z}_i^{n+d}.
\end{equation*}
For the first of these 3 sums, we can directly apply Lemma \ref{E_inverse} to get
\begin{align*}
    \sum_{i=1}^{d_1} \sum_{j=1}^d R_{i,j} \overline{z}_i^{n+d} = \sum_{i=1}^{d_1} R_{i,i} \overline{z}_i^{n+d} + \sum_{i=1}^{d_1} \sum_{j \not = i}^d R_{i,j} \overline{z}_i^{n+d} \\
    = \left(\sum_{i=1}^{d_1}\overline{z_i}^{n+d}\right) \det{(B)} + o(1).
\end{align*}

Using again Lemma \ref{E_inverse}, we can see that the second summand is equal to
\begin{align*} \sum_{i=1}^{d-d_1} \frac{(-1)^{i}}{\overline{z}_{i+d_1}} \left(\sum_{j=1}^{d_1} \det{\left(B^{(i + d_1)}_{(j)}\right)}\right) + o(1).
\end{align*}

The last summand is simply $o(1)$ since the values $R_{ij}$ are of order $o(1/\overline{z}^{n+d+1}_{i})$. The result follows.\end{proof}

We are also ready for a proof of Lemma \ref{g_one_zero}.

\begin{proof} [Proof of Lemma \ref{g_one_zero}.]
Let us first express every quantity in terms of $a_j = \frac{1}{\overline{z}_j} \in \mathbb{D}$, $j = 2, \dots, d$ so that $v_m = a_m$, $b_{l,m} = \frac{\overline{a_l} a_m}{1 - \overline{a}_{l} a_{m}}$ and $s_l = \frac{\overline{a}_l}{1 - \overline{a}_l}$.
From the expansion of the determinant along the $l$-th row, we can remove a factor of $\overline{a}_l$ and the same holds for $a_m$ along the $m$-th column, yielding
\begin{equation*}
    G = \left(\prod_{j=2}^d |a_j|^2 \right) \begin{vmatrix} 1 & v_0\\ p^t & H \end{vmatrix},
\end{equation*}
where $v_0 = (1, \dots, 1) \in \mathbb{C}^{d-1}$, $p = \left( p_l \right)_{l=2}^d$, $p_l = \frac{1}{1 - \overline{a}_l}$ and $H = \left(h_{l,m}\right)_{l,m=2}^{d}$, $h_{l,m} = \frac{1}{1 - \overline{a}_l a_m}$. 
The first product is never zero, as $a_j \not = 0$, $j=2, \dots, d$, so we only have to prove that the determinant on the right side is non-zero.

The determinant is the value at $z=1$ of the function $g$ given by the orthogonal projection in $H^2$ of the constant function $1$ onto $\spn\{k_{a_j}(z)\}_{j=2}^d =: V$, where $k_w(z) = \frac{1}{1 - \overline{w}z}$ is the Szeg\"o kernel at $w$ evaluated at $z$ ($w,z \in \mathbb{D}$). 

From this property of projections, it is standard that 
\begin{equation*}
    g(1) = 1 - \prod_{j=2}^d \left(-\overline{a}_j\right) \left(\frac{1 - a_j}{1 - \overline{a}_j}\right).
\end{equation*}
Now notice that $\left| \frac{1 - a_j}{1 - \overline{a}_j} \right | = 1$, while $|a_j| < 1$, so the product in the right hand side is strictly smaller than 1. In other words $g(1) \not = 0$.
\end{proof}

The proposition we need about angles is elementary, being just a particular case of Kronecker's approximation theorem, but we decide to provide a proof for completeness:

\begin{proof}[Proof of Lemma \ref{sum-cosines-lemma}.] Without loss of generality we may assume that all $\theta_i$'s are different. Moreover, from now on, when talking about angles we will restrict ourselves to the interval $[-\pi, \pi)$ with the corresponding mod $2\pi$ association.
Given $\epsilon > 0$ it is enough to find a number $K \in \mathbb{N}$ such that the angles $K\theta_1$, $K\theta_2$ are in $[-\epsilon, \epsilon]$.

Fix $T \in \mathbb{N}$ such that $1/T \leq \epsilon$, and choose $k_1$ such that $\theta'_1 \equiv \theta_1 k_1$,  satisfies $|\theta_1'| \in \left[\frac{-1}{2T^3}, \frac{1}{2T^3}\right]$. 
Then $\{s\theta_{1}'\}_{s=1}^{2T^2}$ is a sequence of $2T^2$ points all of which are in $[-\epsilon, \epsilon]$. Consider the sequence $\{k_1s\theta_2\}_{s=1}^{2T^2}$ which is a set with $2T^2$ elements. From the pigeonhole principle, it follows that there exist $s_1$ and $s_2$ such that $|(s_1 - s_2)k_1\theta_2| \leq \frac{1}{T} = \epsilon$.  Take $k_2 = |s_1 - s_2|$ and $K = k_1k_2$ the results follows. 

The general case follows automatically via induction. \end{proof}

Lastly, we must understand linear combinations of unimodular complex numbers:

\begin{proof}[Proof of Lemma \ref{sum_exponentials_lemma}.]
Suppose we find some $C_i \not = 0$ such that   \begin{equation*}
        \sum_{k=1}^n C_k e^{im\theta_k} = 0, \forall m \geq N.
    \end{equation*}
    Without loss of generality, we can assume that $C_1 \not = 0$ and rearrange the equation to get
    \begin{equation*}
        \sum_{k=2}^n (-C_k/C_1) e^{im(\theta_k - \theta_1)} = 1, \forall m \geq N.
    \end{equation*}
    As this is true for all $m \geq N$, we can sum the different equalities in $m$ to get
    \begin{align*}
        R &= \sum_{m=N}^{N+R} 1 = \sum_{m=N}^{N+R} \sum_{k=2}^n (-C_k/C_1) e^{im(\theta_k - \theta_1)} \\
        &= \sum_{k=2}^n (-C_k/C_1) \sum_{m=N}^{N+R} e^{im(\theta_k - \theta_1)} \leq \sum_{k=2}^n |C_k/C_1| \frac{2}{|1 - e^{i(\theta_k - \theta_1)}|}
    \end{align*}
    Taking the limit when $R \to \infty$ yields the desired contradiction, as the right side is just a constant.
\end{proof}

\section{Further remarks}\label{sect5}

From where we stand it is not difficult to draw a few further directions of research.

One may consider whether our results hold true as well for the zeros of o.p.a. $\{p_n\}_{n\in \N}$ rather than those of $1-p_nf$. This is the equidistribution that is suggested in the original paper \cite{BCLSS} and perhaps a result of interlacing would give this additional asymptotic result.

In some of our theorems, the appearance of subsequences is somewhat surprising. One may wonder whether the propositions do hold without needing to take subsequences, but it seems from the calculations that there are some obstructions. Perhaps this is an actual opportunity: it seems plausible that one could construct some special functions $f$ as limits of other polynomials $f_n$, for which the corresponding subsequences have larger and larger gaps, making $f$ bear some special property with regards to cyclicity, precisely because no subsequence of the natural numbers would work for all $f_n$.

Another natural research line could focus on the strong asymptotics as suggested by Rakhmanov, or on the value distribution or level sets of $1-p_nf$ over $\T$, in general. At least in the Dirichlet space, this can be linked to capacities and the space norm of $1-p_nf$ in a way that connects directly with the study of cyclic functions.

Finally, it seems desirable to remove our requirement that the zeros of the function in study be simple. Although we believe this to be possible, we decided not to include such complications in here. The recent article by Felder and Le \cite{FelderLe} shows how to deal with multiple zeros in a very similar context.

\bigskip

\noindent\textbf{Acknowledgements.} We were supported by the ``Severo Ochoa Programme for Centers of Excellence in R\&D'' (CEX2019-000904-S); by a grant from Agencia Estatal de Investigaci\'on (PID2019-106433GB-I00/AEI/10.13039/501100011033), Spain; and by the Madrid Government (Comunidad de Madrid-Spain) under the Multiannual Agreement with UC3M in the line of Excellence of University Professors (EPUC3M23), and in the context of the V PRICIT (Regional Programme of Research and Technological Innovation).

\end{document}